\newcommand{\RR}{\mathbb{R}}
\newcommand{\CC}{\mathbb{C}}
\newcommand{\NN}{\mathbb{N}}
\newcommand{\ZZ}{\mathbb{Z}}
\newcommand{\HH}{\mathbb{H}}
\def\SS{\mathbb S}
\newtheorem{theorem}{Theorem}[section]
\newtheorem{prop}[theorem]{Proposition}
\newtheorem{cor}[theorem]{Corollary}
\newtheorem{definition}[theorem]{Definition}
\theoremstyle{definition}
\theoremstyle{remark}
\newtheorem{rem}[theorem]{\bf Remark}
\newtheorem{example}[theorem]{\bf Example}
\title{On decomposable LCP structures}
\author{Brice Flamencourt, Andrei Moroianu}
\address{Brice Flamencourt, Universität Stuttgart, Institut für Geometrie und Topologie, Fachbereich Mathematik, Pfaffenwaldring 57, 70569 Stuttgart, Germany.}
\email{brice.flamencourt@mathematik.uni-stuttgart.de}
\address{Andrei Moroianu \\ Université Paris-Saclay, CNRS,  Laboratoire de mathématiques d'Orsay, 91405, Orsay, France, 
and Institute of Mathematics “Simion Stoilow” of the Romanian Academy, 21 Calea Grivitei, 010702 Bucharest, Romania}
\email{andrei.moroianu@math.cnrs.fr}
\subjclass[2020]{53C05, 53C18, 53C29}
\keywords{Conformal geometry, Weyl connections, LCP manifolds, adapted metrics}
\begin{document}
\begin{abstract}
    We introduce the notion of decomposable locally conformally product (LCP) manifolds and characterize those which are defined on quotients of Riemannian Lie groups by co-compact lattices.
\end{abstract}

\maketitle

\section{Introduction}

The study of locally conformally product (LCP) structures is a topic of conformal geometry which has grown up in the last decade, starting with the works of Belgun-Moroianu \cite{BM}, Matveev-Nikolayevsky \cite{MN15,MN17} and Kourganoff \cite{Kou} and more recently developped by Andrada-del Barco-Moroianu \cite{ABM,dBM}, Flamencourt \cite{FlaLCP,F24}, and other authors \cite{BFM, MP}.

LCP manifolds are in many respects similar to locally conformally Kähler (LCK) manifolds. They can be defined either as compact quotients of simply connected (non-flat) Riemannian manifolds with reducible holonomy by discrete subgroups of homotheties not containing only isometries, or as compact conformal manifolds $(M,c)$ carrying a closed non-exact Weyl connection $\nabla$ with reducible (non-zero) holonomy.

According to a fundamental result of Kourganoff \cite{Kou}, the tangent bundle of an LCP manifold $(M,c,\nabla)$ splits into two $\nabla$-parallel distributions, one of which is flat. A Riemannian metric $g$ on $M$ in the conformal class $c$ is called {\em adapted} if the Lee form of $\nabla$ with respect to $g$ vanishes on the flat distribution. Adapted metrics always exist \cite{FlaLCP,MP} and their importance is given by the following observation \cite{FlaLCP}: If $(M,c,\nabla)$ is an LCP structure, and $g\in c$ is adapted, then for every compact Riemannian manifold $(K,g_K)$, the conformal manifold $(M \times K, [g + g_K])$ carries an adapted LCP structure as well. 

The LCP structures obtained in this way were called {\em reducible} in \cite{FlaLCP}, and it is obvious that the study of LCP structures can be reduced to understanding the irreducible ones. However, it might happen that an irreducible LCP manifold is {\em weakly reducible}, in the sense that it is obtained from a reducible LCP manifold by changing the action of the fundamental group on the universal cover (cf. \cite[Example 4.12]{BFM}). 

Because of this phenomenon, we introduce below the slightly more general notion of {\em decomposable} LCP structure (which is, by definition, an LCP structure containing a Riemannian metric with reducible holonomy in the conformal class $c$). One of the motivations for this definition is \cite[Theorem 4.7]{BFM} where it is shown that a decomposable LCP structure $(M,c,\nabla)$ is locally (but in general not globally) reducible.

The article is structured as follows: first of all we recall the basics about LCP manifolds in Section~\ref{preliminaries}, and we review some background material concerning Riemannian Lie groups. We also prove a result of independent interest concerning the de Rham decomposition of simply connected Riemannian Lie groups in Theorem~\ref{2.5}, namely that the factors in the decomposition can be taken to be Lie subgroups of the ambient manifold. Even though this result is probably known to specialists in the field, we were not able to find a reference for it, so we thought it was worth giving it a proof. 

In Section~\ref{generalLiegroups} we investigate reducibility properties concerning Riemannian Lie groups and homogeneous spaces. We start by obtaining a criterion to check the holonomy reducibility of a Riemannian Lie group in Proposition~\ref{redg}, which will be useful for later applications. Next, our attention focuses on conformal geometry of homogeneous spaces - even though our applications will always concern Lie groups in this article -, and we derive the really convenient Theorem~\ref{conformLiegroup}, stating that the only metric which can be reducible in the conformal class of such Riemannian manifolds is the homogeneous one, up to a multiplicative constant.

Section~\ref{Sectiondecomp} is devoted to the main subject of this paper, decomposable LCP manifolds. In Proposition \ref{q+2} we extend \cite[Theorem 4.7]{BFM} (which states that the universal cover of a decomposable LCP manifold has a de Rham factor containing the flat distribution $\RR^q$ and the metric dual of the Lee form $\theta$) by showing that this factor has dimension at least $q+2$, i.e. is strictly larger than $\RR^q\oplus \theta^\sharp$.

We then investigate a particular subclass of LCP structures: the Lie LCP manifolds, which are compact quotients of Riemannian Lie groups by lattices, focusing on a characterization of their decomposability.  Notice that proving the indecomposability of a general LCP manifold is far from being trivial, since one has to check the irreducibility of each metric in the conformal class. Luckily, in Corollary~\ref{declcp} we show that the only possibly reducible metric on a Lie LCP manifold is the left-invariant one up to a multiplicative constant, so their decomposability can be described in purely algebraic terms by the metric properties of the Lie algebras.

Our previous results converge to one application, namely showing that a fundamental example of Lie LCP manifold (constructed in \cite[Section 5.2]{dBM}), is indecomposable, a property proved in Proposition~\ref{notdecomposable}. We end the paper with a last section discussing some refinements of the notion of reducibility for LCP manifolds and giving further examples of LCP manifolds which belong to the different classes of decomposable LCP structures introduced here.

{\bf Acknowledgments.} This work was partly supported by the PNRR-III-C9-2023-I8 grant CF 149/31.07.2023 {\em Conformal Aspects of Geometry and Dynamics} and by the Procope Project No. 57650868 (Germany) / 48959TL (France).

\section{Preliminaries} \label{preliminaries}

\subsection{LCP structures}

We recall here the basic definitions concerning Locally Conformally Product (in short, LCP) structures. A detailed discussion on this topic can also be found in \cite{FlaLCP} for example.

In order to define an LCP structure, we first need to introduce Weyl connections, which are special connections in conformal geometry, generalizing the concept of Levi-Civita connection from Riemannian geometry.

\begin{definition}
Let $(M, c)$ be a conformal manifold. A Weyl connection on $(M,c)$ is a torsion-free connection $\nabla$ on $M$ which preserves the conformal structure, i.e. for any metric $g \in c$, there is a $1$-form $\theta_g$, called the {\em Lee form} of $\nabla$ with respect to $g$, such that $\nabla g = - 2 \theta_g \otimes g$.
 \end{definition}

The Lee form of a given Weyl connection $\nabla$ depends on the metric in the conformal class. However, the Lee forms of $\nabla$ with respect to any two metrics in $c$ differ by an exact $1$-form, which motivates the following definition:

\begin{definition}
A Weyl connection $\nabla$ on a conformal manifold $(M,c)$ is called {\em closed} if the Lee form of $\nabla$ with respect to one metric - and then to all metrics - in $c$ is closed. Similarly, $\nabla$ is called {\em exact} if the Lee form of $\nabla$ with respect to one metric - and then to all metrics - in $c$ is exact.
\end{definition}

Moreover, the Lee form gives information on the nature of $\nabla$, as shown by the following fundamental property:

\begin{prop}
Let $(M,c)$ be a conformal manifold endowed with a Weyl connection $\nabla$. If $\nabla$ is closed, then it is locally the Levi-Civita connection of a metric in $c$. If $\nabla$ is exact, then this statement holds globally.
\end{prop}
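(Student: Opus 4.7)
The plan is to exploit the conformal transformation law of the Lee form: under a conformal change $g' = e^{2f}g$ within the class $c$, a direct computation gives
\[
\nabla g' = \nabla(e^{2f}g) = 2(df)e^{2f}g + e^{2f}\nabla g = e^{2f}\bigl(2\,df\otimes g - 2\theta_g\otimes g\bigr) = -2(\theta_g - df)\otimes g',
\]
so that $\theta_{g'} = \theta_g - df$. This identity reduces the whole statement to solving the equation $df = \theta_g$, the obstruction to which is exactly (local or global) exactness of $\theta_g$.

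For the first assertion, assume $\nabla$ is closed, and pick any reference metric $g\in c$. Then $\theta_g$ is closed, and by the Poincaré Lemma every point of $M$ has an open neighborhood $U$ on which there exists a smooth function $f\colon U\to\RR$ with $df = \theta_g|_U$. On $U$ define the metric $g' := e^{2f}g\in c|_U$. The transformation formula above gives $\theta_{g'} = 0$, hence $\nabla g' = 0$ on $U$. Since $\nabla$ is torsion-free by the definition of a Weyl connection, the uniqueness part of the fundamental theorem of Riemannian geometry shows that $\nabla$ coincides with the Levi-Civita connection of $g'$ on $U$.

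For the second assertion, assume $\nabla$ is exact and fix $g\in c$. By definition there exists a globally defined function $f\in C^\infty(M)$ with $\theta_g = df$. The very same argument, carried out with this global $f$, yields a globally defined metric $g' = e^{2f}g\in c$ with $\theta_{g'}=0$, and hence $\nabla$ is the Levi-Civita connection of $g'$ on all of $M$.

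I do not expect any serious obstacle here: the only thing one must verify carefully is that the Lee form truly transforms as $\theta_{g'} = \theta_g - df$ (as opposed to $\theta_g + df$), which is just bookkeeping in the sign convention $\nabla g = -2\theta_g\otimes g$ fixed in the definition. Once this is settled, both statements are immediate consequences of the Poincaré Lemma, in its local version for the closed case and trivially for the exact case.
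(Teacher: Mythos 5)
Your argument is correct: the transformation law $\theta_{g'}=\theta_g-df$ follows exactly as you compute from the convention $\nabla g=-2\theta_g\otimes g$, and combining the (local or global) primitive of $\theta_g$ with torsion-freeness and the uniqueness part of the fundamental theorem of Riemannian geometry is precisely the standard proof. The paper states this proposition as recalled background without giving a proof, so there is nothing to compare against; your write-up supplies the expected argument with no gaps.
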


In the case of a closed Weyl connection $\nabla$ on a conformal manifold $(M,c)$, the pull-back $\tilde \nabla$ of the Weyl connection to the universal covering $\tilde M$ of $M$ is a Weyl connection for the conformal structure $\tilde c$ obtained by pulling-back $c$. This Weyl connection is exact since $\tilde M$ is simply connected, thus there exists a metric $h \in \tilde c$, unique up to a multiplication by a constant, such that $\nabla^h = \tilde \nabla$, where $\nabla^h$ is the Levi-Civita connection of $h$. This metric is invariant by the fundamental group of $M$ (i.e. it is the pull-back of a metric on $M$) if and only if $\nabla$ is exact.  

LCP structures arise when one consider closed, non-exact Weyl connections on a compact conformal manifold. In this situation, one has a remarkable result proved by Kourganoff \cite{Kou}:

\begin{theorem}[Kourganoff] \label{fundLCP}
Let $(M,c)$ be conformal manifold endowed with a closed, non-exact Weyl connection $\nabla$. Let $h$ be a metric on $\tilde M$, the universal cover of $M$, such that $\nabla^h = \tilde\nabla$ where $\tilde \nabla$ is the pull-back of $\nabla$ to $\tilde M$. Then, one of the three following cases occurs:
\begin{itemize}
\item $(\tilde M, h)$ is flat;
\item $(\tilde M, h)$ is irreducible;
\item $(\tilde M, h)$ is a Riemannian product $\RR^q \times (N, g_N)$ where $q \ge 1$ and $(N, g_N)$ is a non-flat, incomplete Riemannian manifold.
\end{itemize}
\end{theorem}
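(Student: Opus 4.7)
The plan is to exploit the homothety action of the deck group $\Gamma := \pi_1(M)$ on $(\tilde M, h)$. Since $\nabla^h$ is the pull-back of $\nabla$, each $\gamma \in \Gamma$ preserves $\nabla^h$, hence acts by $\gamma^* h = c_\gamma^2 h$ for some $c_\gamma > 0$, producing a character $\rho : \Gamma \to \RR_{>0}$ whose triviality is equivalent to the exactness of $\nabla$. By hypothesis $\rho$ is non-trivial, so there is some non-isometric homothety $\gamma_0 \in \Gamma$, while $M = \Gamma \backslash \tilde M$ is compact, so the action of $\Gamma$ is cocompact. The goal is then to show that if $(\tilde M, h)$ is neither flat nor irreducible, these constraints force the third alternative.

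Assume henceforth that $(\tilde M, h)$ is neither flat nor irreducible. A first step is a de Rham-type splitting: the reducibility of the restricted holonomy combined with the simple connectedness of $\tilde M$ yields a Riemannian product decomposition $\tilde M = \tilde M_0 \times M_1 \times \cdots \times M_k$, where $\tilde M_0$ is the maximal flat factor (possibly trivial) and the $M_i$ are simply connected, irreducible, non-flat. As $\Gamma$ acts by homotheties, it permutes the $M_i$ up to isometric rearrangements, so after passing to a finite-index subgroup $\Gamma' \leq \Gamma$ we may assume every $\gamma \in \Gamma'$ preserves each factor individually.

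The crucial step is to show that the scaling $c_{\gamma_0}$ (or an iterate of it belonging to $\Gamma'$) is absorbed entirely by the flat factor $\tilde M_0$: on each non-flat $M_i$ the restriction of every element of $\Gamma'$ is an isometry. The underlying principle is that a non-isometric homothety of an irreducible, non-flat Riemannian manifold is incompatible with the properly discontinuous cocompact action on the ambient product, as iterated homotheties would force orbit accumulation on the compact quotient, contradicting the discreteness of $\Gamma$. Once this is established, the non-triviality of $\rho$ forces $q := \dim \tilde M_0 \geq 1$, and a further argument — using that $\tilde M_0$ is simply connected, flat, and carries a non-trivial similarity action coming from $\Gamma'$ — identifies $\tilde M_0$ with $\RR^q$ equipped with its canonical flat metric, rather than a proper simply connected open subset.

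Finally, one argues that $N := M_1 \times \cdots \times M_k$ cannot be complete. Indeed, if $N$ were complete then $\tilde M = \RR^q \times N$ would also be complete, and the cocompact $\Gamma$-action — scaling non-trivially on $\RR^q$ and acting isometrically on $N$ — could be used to derive a contradiction by exhibiting an orbit that accumulates in $\Gamma \backslash \tilde M$. The main obstacle in the whole scheme, and the substantive content of Kourganoff's theorem, is the central step controlling how non-isometric scalings interact with non-flat de Rham factors under a cocompact action; once this structural statement is in hand, the identification of the flat factor with $\RR^q$ and the incompleteness of $N$ follow through relatively direct geometric arguments.
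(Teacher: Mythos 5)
The paper offers no proof of this statement---it is quoted directly from Kourganoff \cite{Kou}, where it is the main theorem---so there is nothing internal to compare your argument with; I can only assess your outline on its own terms. It has the right global shape (deck group acting by homotheties with a character $\rho:\Gamma\to\RR_{>0}$ whose non-triviality encodes non-exactness, then a splitting analysis), but two of its steps do not hold up. First, you derive the product decomposition $\tilde M=\tilde M_0\times M_1\times\cdots\times M_k$ from ``reducibility of the restricted holonomy combined with the simple connectedness of $\tilde M$''. The global de Rham theorem requires completeness, and $(\tilde M,h)$ is precisely \emph{not} complete in the case under analysis (the conclusion itself asserts that $N$ is incomplete). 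Simple connectedness plus reducible holonomy only yields globally defined parallel distributions, not a global isometric splitting (a simply connected open subset of $\RR^2$ that is not a product of intervals already has trivial holonomy without being a Riemannian product). Obtaining the splitting $\RR^q\times N$ without completeness, using only the cocompact similarity action, is the principal technical content of Kourganoff's paper---hence its title---and cannot be taken as a starting point.

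Second, your ``crucial step'' contradicts your own first step. Since $\gamma^*h=c_\gamma^2 h$ globally and (after passing to $\Gamma'$) $\gamma$ preserves each metric factor, its restriction to \emph{every} factor is a homothety of the \emph{same} ratio $c_\gamma$; the scaling cannot be ``absorbed entirely by the flat factor'' with $\Gamma'$ acting isometrically on the non-flat $M_i$, for then one would have $\gamma^*h=c_\gamma^2h_0\oplus h_1\oplus\cdots\oplus h_k\neq c_\gamma^2h$. In the basic examples (e.g.\ Example~\ref{FundamentalEx} of this paper) the distinguished generator acts as a strict homothety on the non-flat part as well, and this is exactly \emph{why} $N$ must be incomplete: a strict homothety (or its inverse) of a complete factor is a contraction and has a fixed point, which combined with the fixed point it necessarily has on the complete flat factor would contradict the freeness of the deck action. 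The mechanism is therefore the opposite of the one you describe, and the conclusions you draw from it (in particular that $q\ge1$ because ``the scaling must live in the flat factor'') do not survive. You correctly locate where the substantive difficulty sits, but the sketch neither proves that step nor states it in a form that is true.
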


The third case in Theorem~\ref{fundLCP} corresponds to so-called LCP structures. More precisely:

\begin{definition}
An LCP structure is a triple $(M,c,\nabla)$ where $M$ is a compact manifold, $c$ is a conformal structure on $M$ and $\nabla$ is a closed, non-exact Weyl connection, which is non-flat and reducible (i.e. the representation of its restricted holonomy group $\mathrm{Hol}_0(\nabla)$ is reducible).
\end{definition}

With the notations of the third case of Theorem~\ref{fundLCP},  $\RR^q$ is called the {\em flat part} of the LCP structure, while $(N,g_N)$ is called the non-flat part. The distributions $T \RR^q$ and $T N$ descend to $\nabla$-parallel distributions on $M$, respectively called the {\em flat distribution} and the {\em non-flat distribution} of the LCP manifold.

\subsection{Riemannian Lie groups}
Let $G$ be a Lie group. In all this paper, Lie groups are considered to be connected (except in the very particular case where we consider a lattice in a Lie group).

Left-invariant objects on $G$ are completely described by their counterpart on the Lie algebra $\mathfrak g$ of $G$. In this spirit, left-invariant vector fields will be viewed as elements of $\mathfrak g$. Moreover, we recall that the structure of a {\em Riemannian} Lie group is given by a left-invariant metric $g$ on $G$, i.e. such that the left-translations act as isometries. This metric is then completely determined by a positive definite scalar product $\langle \cdot, \cdot \rangle$ on $\mathfrak g$. The Levi-Civita connection $\nabla^g$ of $g$ preserves left-invariant vector fields, so it can also be viewed as a bilinear map $\nabla^g: \mathfrak g \times \mathfrak g \to \mathfrak g$ determined by the Koszul formula:
\begin{equation} \label{LCliegroup}
2 \langle \nabla^g_x y, z \rangle = \langle [x,y],z \rangle + \langle [z,x],y \rangle - \langle [y,z],x \rangle.
\end{equation}

We discuss in this section some general results concerning Riemannian Lie groups, that we will later apply to the particular case of LCP Lie manifolds. More precisely, we turn our attention to the reducibility of the holonomy group of these Riemannian manifolds. The de Rham decomposition of a simply connected Riemannian Lie group admits an interesting structure, which we highlight here:

\begin{theorem}\label{2.5}
Let $(G, g)$ be a simply connected Riemannian Lie group. Then, the factors in the de Rham decomposition of $(G,g)$ can be taken to be subgroups of $G$ with the induced metrics.
\end{theorem}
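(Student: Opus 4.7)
The plan is to apply de Rham's decomposition theorem to $(G,g)$ and promote each factor, a priori only a Riemannian submanifold, to a Lie subgroup. Since $G$ is simply connected and every left-invariant Riemannian metric on a Lie group is complete, de Rham's theorem yields an orthogonal decomposition $\mathfrak{g}=V_0\oplus V_1\oplus\cdots\oplus V_k$ of $T_eG$ into the flat part and the irreducible $\mathrm{Hol}_e(\nabla^g)$-invariant subspaces, globally defined parallel distributions $\mathcal{D}_i$ on $G$ satisfying $\mathcal{D}_i|_e=V_i$, and an isometry of $G$ with the Riemannian product of the maximal integral leaves of the $\mathcal{D}_i$ through $e$ (with their induced metrics).

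My first task is to show that each distribution $\mathcal{D}_i$ is left-invariant. Since $L_h$ is an isometry for every $h\in G$, it sends the holonomy decomposition at $e$ to the holonomy decomposition at $h$, hence $(L_h)_*V_i=\mathcal{D}_{\sigma_h(i)}|_h$ for some permutation $\sigma_h$ of $\{0,\ldots,k\}$. Smooth dependence of both sides on $h$, combined with the fact that the subspaces $\mathcal{D}_j|_h$ are mutually orthogonal, forces $h\mapsto\sigma_h$ to be locally constant; since $G$ is connected and $\sigma_e=\mathrm{id}$, we conclude $\sigma_h=\mathrm{id}$ throughout, so $\mathcal{D}_i|_h=(L_h)_*V_i$.

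The core step is then to show that each $V_i$ is a Lie subalgebra of $\mathfrak{g}$. The left-invariance of $g$ makes $\nabla^g$ left-invariant as well, so it descends to a bilinear map $\mathfrak{g}\times\mathfrak{g}\to\mathfrak{g}$ given by the Koszul formula~\eqref{LCliegroup}. Parallelism of $\mathcal{D}_i$, combined with its left-invariance from the previous step, forces $\nabla^g_xy\in V_i$ for all $x\in\mathfrak{g}$ and $y\in V_i$. Taking $x,y\in V_i$ and invoking torsion-freeness $[x,y]=\nabla^g_xy-\nabla^g_yx$ places $[x,y]$ in $V_i$.

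It then remains to identify the de Rham factors with Lie subgroups: letting $G_i$ denote the connected Lie subgroup of $G$ with Lie algebra $V_i$, its tangent distribution at each point equals $\mathcal{D}_i$ by left-invariance, so $G_i$ is the maximal integral leaf of $\mathcal{D}_i$ through $e$, and under the de Rham isometry it corresponds to the $i$-th factor with the induced metric. I expect the main subtlety to be the left-invariance argument in the first step: once it is in place, the Koszul/torsion-freeness computation is essentially one line, and the identification of integrable left-invariant distributions with Lie subgroups is standard.
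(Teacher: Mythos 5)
Your proof is correct and follows essentially the same route as the paper's: both hinge on showing that the de Rham parallel distributions are left-invariant, using the uniqueness of the de Rham decomposition together with the connectedness of $G$. The only difference is the final step, where you verify infinitesimally that $V_i$ is a Lie subalgebra (via the Koszul formula and torsion-freeness) and then invoke the subgroup--subalgebra correspondence, whereas the paper argues directly at the group level that the integral leaf $G_i$ satisfies $L_aG_i=G_i$ for $a\in G_i$ and is therefore closed under multiplication and inversion; both versions are valid.
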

\begin{proof}
The decomposition of the holonomy representation of $\nabla^g$ induces $\nabla^g$-parallel distributions $D_0, \ldots, D_k$ on $G$ such that
\begin{equation}\label{dr}T G = D_0 \overset{\perp}{\oplus} \ldots \overset{\perp}{\oplus} D_k,
\end{equation}
where $D_0$ is the maximal flat distribution. Correspondingly, by the global de Rham theorem, $(G, g)$ is isometric to the product $(G_0, g_0) \times \ldots \times (G_k, g_k)$ where $G_i$ is the integral leaf of the distribution $D_i$ passing through the identity. Using the fact that left-translations are isometries of $(G,g)$, together with the uniqueness of the de Rham decomposition (up to permutation of the factors) and the connectedness of $G$, it turns out that for any $a \in G$, $(L_a)_* D_i = D_i$.

Then, for $a \in G_i$, $L_a G_i$ contains $a$ and is an integral leaf of $D_i$. Thus $L_a G_i =  G_i$, and for any $b \in G_i$ we have that $a b \in G_i$. In addition, there exists $b \in G_i$ such that $a b = e \in G_i$, showing that $a^{-1} = b \in G_i$. Altogether, we proved that the $G_i$'s are subgroups of $G$ and the metrics $g_i$ for $i \in \{0, \ldots, k\}$ are obviously left-invariant.
\end{proof}

\begin{rem}\label{rdr}
    The same argument shows that more generally, for every subset $I\subset \{0,\ldots,k\}$ of the set of indices of the de Rham splitting \eqref{dr}, the distribution $T^I:=\oplus_{i\in I}D_i$ is integrable, left-invariant, and its integral leaf through the identity is a subgroup of $G$.
\end{rem}

\section{Holonomy reducibility of Riemannian Lie groups and homogeneous spaces} \label{generalLiegroups}

\subsection{Reducible Riemannian Lie groups}

A criterion to detect Riemannian Lie groups with reducible holonomy through their Lie algebra can now be formulated under the following form:

\begin{prop}\label{redg}
    Let $(G, g)$ be a simply connected Riemannian Lie group and denote by $(\mathfrak g, \langle \cdot , \cdot \rangle)$ the corresponding metric Lie algebra. Then, $(G,g)$ has reducible holonomy as a Riemannian manifold if and only if there exists a non-trivial orthogonal decomposition $\mathfrak g = \mathfrak g_1{\oplus} \mathfrak g_2$ such that $\mathfrak g_1$ and $\mathfrak g_2$ are Lie subalgebras of $\mathfrak g$ and
\begin{equation}\label{cond}
\langle [x_1,x_2], x_2 \rangle = \langle [x_1, x_2], x_1 \rangle = 0, \qquad \forall x_1\in \mathfrak g_1,\ \forall x_2\in \mathfrak g_2.
\end{equation}
\end{prop}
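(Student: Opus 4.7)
The strategy is to translate holonomy reducibility into the algebraic conditions \eqref{cond} via the Koszul formula \eqref{LCliegroup}. The bridge between the two sides is the fact that a left-invariant distribution $D_i$ associated to a subspace $\mathfrak g_i\subset\mathfrak g$ is $\nabla^g$-parallel if and only if $\nabla^g_x y\in\mathfrak g_i$ for every $x\in\mathfrak g$ and $y\in\mathfrak g_i$, and on a simply connected Riemannian manifold the existence of a proper orthogonal parallel splitting is equivalent to holonomy reducibility.

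For the forward implication, suppose $(G,g)$ has reducible holonomy. By Theorem~\ref{2.5} (together with Remark~\ref{rdr}, which allows us to regroup the de Rham factors), one can write $G=G_1\times G_2$ non-trivially with $G_1,G_2$ Lie subgroups, whose Lie algebras $\mathfrak g_1,\mathfrak g_2$ form an orthogonal direct sum decomposition of $\mathfrak g$ by Lie subalgebras. Parallelism of the corresponding distributions yields $\nabla^g_{x_2}x_1\in\mathfrak g_1$ and $\nabla^g_{x_1}x_2\in\mathfrak g_2$ for $x_i\in\mathfrak g_i$. Pairing the first relation with $x_2$ and the second with $x_1$, and substituting into \eqref{LCliegroup} while using $[x,x]=0$ to kill one summand in each case, gives the two identities of \eqref{cond} immediately.

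For the converse, assume the algebraic decomposition is given. I would show that the left-invariant distribution $D_1$ determined by $\mathfrak g_1$ is $\nabla^g$-parallel, i.e. that $\langle \nabla^g_x y,z\rangle=0$ for all $y\in\mathfrak g_1$, $z\in\mathfrak g_2$ and $x\in\mathfrak g$; the analogous statement for $D_2$ is symmetric. By linearity, it is enough to treat $x\in\mathfrak g_1$ and $x\in\mathfrak g_2$ separately. In the Koszul formula, the subalgebra property eliminates terms of the form $\langle[x,y],z\rangle$ or $\langle[z,x],y\rangle$ whenever both bracket arguments lie in the same $\mathfrak g_i$. For the surviving terms one polarizes \eqref{cond}: substituting $x_1+y_1$ for $x_1$ (with $y_1\in\mathfrak g_1$) in the first identity yields
\begin{equation*}
\langle[x_1,x_2],y_1\rangle+\langle[y_1,x_2],x_1\rangle=0,
\end{equation*}
and analogously the second identity gives
\begin{equation*}
\langle[x_1,x_2],z_2\rangle+\langle[x_1,z_2],x_2\rangle=0,
\end{equation*}
for all $x_1,y_1\in\mathfrak g_1$ and $x_2,z_2\in\mathfrak g_2$. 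Plugging these identities back into \eqref{LCliegroup} in each of the two cases causes all terms to cancel, establishing the parallelism.

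The main subtlety lies in the converse: because \eqref{cond} only constrains the diagonal pairings $\langle[x_1,x_2],x_1\rangle$ and $\langle[x_1,x_2],x_2\rangle$, the brackets $[\mathfrak g_1,\mathfrak g_2]$ may a priori have components in both summands, so one cannot argue directly that $D_i$ is preserved by $\mathrm{ad}$; it is only after combining the two polarized versions of \eqref{cond} with the Koszul formula that the correct cancellation appears. By contrast, the forward direction is just a direct unpacking of the parallelism of the de Rham factors provided by Theorem~\ref{2.5}.
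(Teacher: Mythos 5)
Your proposal is correct and follows essentially the same route as the paper: the forward direction extracts \eqref{cond} from the parallel left-invariant splitting provided by Theorem~\ref{2.5} and Remark~\ref{rdr}, and the converse polarizes \eqref{cond} into \eqref{cond1}--\eqref{cond2} and feeds these into the Koszul formula \eqref{LCliegroup} to establish parallelism of the two left-invariant distributions. The only (harmless) cosmetic differences are that you verify $\langle\nabla^g_x y_1,z_2\rangle=0$ for both choices of the direction $x$, whereas the paper records the two mixed derivatives $\nabla^g_{x_1}x_2$ and $\nabla^g_{x_2}x_1$ and leaves the remaining case to metricity.
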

Note that, by polarization, \eqref{cond} is equivalent to 
\begin{equation}\label{cond1}
\langle [x_1,x_2], y_2 \rangle+\langle [x_1,y_2], x_2 \rangle = 0, \qquad \forall x_1\in \mathfrak g_1,\ \forall x_2,y_2\in\mathfrak g_2,
\end{equation}
\begin{equation}\label{cond2}
\langle [x_1, x_2], y_1 \rangle+\langle [y_1, x_2], x_1 \rangle = 0, \qquad \forall x_1,y_1\in \mathfrak g_1,\ \forall x_2\in\mathfrak g_2.
\end{equation}
\begin{proof}
Assume that $(G,g)$ has reducible holonomy and let $TG=T_1\oplus T_2$ be an orthogonal $\nabla^g$-parallel decomposition of the tangent bundle of $G$, such that each of $T_1$ and $T_2$ are direct sums of distributions of the de Rham splitting \eqref{dr} of $G$. By Theorem \ref{2.5} and Remark \ref{rdr}, $T_1$ and $T_2$ are left-invariant distributions, and their the integral leaves through the identity are Lie subgroups of $G$.
We denote these subgroups by $G_1$ and $G_2$, and their Lie algebras by $\mathfrak{g}_1$  and $\mathfrak{g}_2$. For every $x_1\in \mathfrak{g}_1$ and  $x_2\in \mathfrak{g}_2$ one has:
\[\langle [x_1,x_2], x_1 \rangle = \langle \nabla^g_{x_1} x_2-\nabla^g_{x_2} x_1, x_1\rangle =0,\]
since $\nabla^g_{x_1} x_2$ is in $\mathfrak g_2$ and by use of Equation~\eqref{LCliegroup}. The second relation in \eqref{cond} is similar.

Conversely, assume that $\mathfrak g = \mathfrak g_1{\oplus} \mathfrak g_2$ is an orthogonal direct sum decomposition of $\mathfrak{g}$ into subalgebras satisfying \eqref{cond}. Let $T_1$ and $T_2$ be the left-invariant distributions of $G$ determined by $\mathfrak g_1$ and $\mathfrak g_2$. For every $x_1,y_1\in \mathfrak{g}_1$ and  $x_2\in \mathfrak{g}_2$, the Koszul formula~\eqref{LCliegroup}, shows that
\begin{align*}
2 \langle \nabla^g_{x_1} x_2, y_1 \rangle &= \langle [x_1, x_2], y_1 \rangle + \langle [y_1, x_1], x_2 \rangle - \langle [x_2, y_1], x_1 \rangle \\
&= \langle [x_1,x_2], y_1 \rangle+\langle [y_1,x_2], x_1 \rangle\stackrel{\eqref{cond2}}{=}0.
\end{align*}
Similarly we obtain that $\langle \nabla^g_{x_2} x_1, y_2 \rangle=0$ for all $x_1 \in \mathfrak g_1$ and $x_2,y_2 \in \mathfrak g_2$. This shows that $T_1$ and $T_2$ are $\nabla^g$-parallel.
\end{proof}

\subsection{Reducible metrics on homogeneous spaces}

We investigate in this section the reducibility of complete metrics in the conformal class of homogeneous Riemannian metrics. Our goal is to prove that only the constant multiples of the homogeneous metric can be complete and reducible.
Notice that in the compact case, this result follows from the more general fact that non-constant multiples of homogeneous metrics on compact simply connected homogeneous manifolds of dimension $n$ have holonomy $\mathrm{SO}(n)$ (cf. \cite[Corollary 2.3]{MS2008}).

We first recall a remarkable result of Tashiro and Miyashita \cite{TM67}, which gives a strong obstruction for complete Riemannian products to admit non-isometric conformal vector fields.

\begin{theorem}[\cite{TM67}] \label{TashMiy}
Every complete conformal vector field on a complete non-flat Riemannian manifold with reducible holonomy is Killing.
\end{theorem}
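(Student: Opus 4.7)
The plan is to argue in two stages: first show that the conformal factor $\varphi$, defined by $\mathcal{L}_X g = 2\varphi g$, must be globally constant; then rule out nontrivial homotheties on complete non-flat Riemannian manifolds via a fixed-point argument. I would begin by lifting $X$ to a complete conformal vector field $\tilde X$ on the universal cover $\tilde M$. By the reducibility of the holonomy and the de Rham theorem, $\tilde M$ splits isometrically as a Riemannian product $\tilde M_1 \times \tilde M_2$, where by the non-flat hypothesis and by grouping the de Rham factors appropriately one can arrange that $\tilde M_1$ is a non-trivial non-flat factor.

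For the first stage I would use the integrability conditions obtained by differentiating the conformal Killing equation twice: these give a schematic expression for $\nabla^2 \tilde\varphi$ in terms of $\mathcal{L}_{\tilde X} \Ric$ together with multiples of the metric, and additionally the conformal invariance $\mathcal{L}_{\tilde X} W = 0$ of the Weyl tensor further constrains $\tilde X$. Since both $\Ric$ and $W$ of a Riemannian product have a very rigid block structure with respect to the splitting, combining these identities forces the restriction of $\nabla^2 \tilde\varphi$ to the $\tilde M_1$ directions to be a multiple of $g_1$. Tashiro's classification then applies: a complete Riemannian manifold admitting a non-constant function whose Hessian is proportional to the metric is isometric to a warped product, in particular has reducible holonomy, and this is incompatible with $\tilde M_1$ being a non-flat irreducible de Rham factor. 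Hence $\tilde\varphi$ is constant along $\tilde M_1$, and a symmetric or continuation argument along the remaining directions (using the completeness of $\tilde X$ together with the constraint imposed on the flat factor) forces $\tilde\varphi$, and hence $\varphi$, to be globally constant.

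Once $\varphi \equiv c$, the flow $\phi_t$ of $X$ satisfies $\phi_t^* g = e^{2ct} g$. Assuming $c > 0$, the estimate $d_g(\phi_{-(n+1)}(p), \phi_{-n}(p)) = e^{-cn} d_g(p, \phi_{-1}(p))$ produces a Cauchy sequence whose limit $p_0$ is a fixed point of all $\phi_t$ by completeness of $g$. At $p_0$, the scaling identity $\phi_t^* R^{(0,4)} = e^{2ct} R^{(0,4)}$ specialized to the fixed point forces $R(p_0) = 0$; propagation via $|R(\phi_t(q))|_g = e^{-2ct}|R(q)|_g$ together with $\phi_{-n}(q) \to p_0$ for every $q$ yields $R \equiv 0$, contradicting non-flatness (the case $c < 0$ is symmetric). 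Hence $c = 0$ and $X$ is Killing. The main obstacle is the first stage: extracting the constancy of $\varphi$ from the conformal Killing equation on a reducible manifold requires delicate use of both the Ricci and Weyl tensor structures together with Tashiro's warped-product classification, and this is the genuine technical content of \cite{TM67}.
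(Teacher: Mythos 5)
This statement is quoted from Tashiro--Miyashita \cite{TM67}; the paper gives no proof of it, so your attempt can only be judged on its own merits. Your second stage is fine: the Banach fixed-point argument showing that a complete non-flat Riemannian manifold admits no one-parameter group of non-isometric homotheties is the standard (and correct) way to finish, and it is where the completeness of both the metric and the vector field is genuinely used.

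The first stage, however, contains a real gap, and it sits exactly where you yourself locate ``the genuine technical content'' of the theorem. First, the key analytic step --- that the integrability conditions for the conformal Killing equation, combined with the block structure of $\Ric$ and $W$ on a product, force $\Hess\tilde\varphi$ restricted to the $\tilde M_1$-directions to be proportional to $g_1$ --- is only asserted, not derived; the mixed components $\Hess\tilde\varphi(U,V)$ with $U\in T\tilde M_1$, $V\in T\tilde M_2$ and the interplay between the two factors are precisely what \cite{TM67} has to control, and nothing in your sketch does this. Second, even granting that step, your appeal to Tashiro's classification is wrong as stated: a complete manifold carrying a non-constant function with $\Hess\varphi=\psi\, g$ is a sphere, a Euclidean space, or a warped product over a line, and \emph{none} of these is forced to have reducible holonomy --- $\SS^n$, $\HH^n$ and generic warped products are irreducible. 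So ``isometric to a warped product, in particular has reducible holonomy'' is a non sequitur, and the contradiction with $\tilde M_1$ being a non-flat irreducible de Rham factor evaporates: one must actually exclude the sphere/hyperbolic/warped possibilities for the factor, which again requires using the ambient product structure (e.g.\ that the same function $\tilde\varphi$ must simultaneously satisfy the analogous equation in the $\tilde M_2$-directions, which is incompatible with $\mathcal{L}_{\tilde X}(g_1+g_2)=2\tilde\varphi(g_1+g_2)$ unless $\tilde\varphi$ is constant). Finally, the ``symmetric or continuation argument along the remaining directions'' is not an argument. As it stands, the proposal reduces the theorem to its hardest step and then cites the theorem's own source for that step.
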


Assuming the existence of complete reducible metrics in the conformal class of a homogeneous metric, and using the fact that for any point $x$ there is a family of Killing vector fields forming a basis of the tangent space at $x$, our result will follow from Theorem~\ref{TashMiy}, provided that we know which homogeneous spaces are globally conformally flat. We thus need a classification of these spaces.

We start by proving the following rigidity property of the standard metric on $\RR^n$ in its conformal class:

\begin{theorem} \label{conformRn}
Let $n \ge 3$ and let $g$ be a scalar-flat Riemannian metric on $\RR^n$, belonging to the conformal class of the standard metric $g_{\RR^n}$. Then $g$ is homothetic to $g_{\RR^n}$. In particular, $g_{\RR^n}$ is the unique flat metric in $[g_{\RR^n}]$ up to a multiplication by a constant.
\end{theorem}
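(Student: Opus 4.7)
The plan is to translate the scalar-flatness of $g$ into a PDE on the conformal factor and then appeal to a classical rigidity theorem for harmonic functions on $\RR^n$. Since $n\ge 3$, I would write $g = u^{4/(n-2)} g_{\RR^n}$ for some smooth positive function $u$ on $\RR^n$, and invoke the standard Yamabe-type conformal transformation law for the scalar curvature:
\[
\Scal(g)\, u^{(n+2)/(n-2)} \;=\; \Scal(g_{\RR^n})\, u \;-\; \frac{4(n-1)}{n-2}\, \Delta u,
\]
where $\Delta = \sum_i \partial_i^2$ is the flat Laplacian on $\RR^n$. Using $\Scal(g_{\RR^n})=0$ together with the hypothesis $\Scal(g)=0$, this collapses to $\Delta u = 0$ on $\RR^n$.

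Thus $u$ is a positive harmonic function on the whole of $\RR^n$. At this point I would invoke the Liouville theorem for positive harmonic functions on $\RR^n$ (a consequence of the Harnack inequality applied on concentric balls of arbitrarily large radius), which forces $u$ to be constant. Hence $g$ is a constant multiple of $g_{\RR^n}$, i.e.\ homothetic to the Euclidean metric, proving the main assertion.

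For the final sentence of the theorem, I would simply note that any flat metric is in particular scalar-flat, so the statement just established immediately implies that any flat metric in $[g_{\RR^n}]$ is homothetic to $g_{\RR^n}$.

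No serious obstacle is anticipated: the computation is direct, and the only external input is the Liouville theorem for positive harmonic functions. If one prefers to be self-contained, the same conclusion can be reached by combining the mean-value property with a Harnack-type estimate on enlarging balls, but quoting Liouville is cleaner and equally elementary.
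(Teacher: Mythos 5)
Your proposal is correct and follows essentially the same route as the paper: the paper writes $g=e^{2f}g_{\RR^n}$ and shows $e^{(n-2)f/2}$ is a positive harmonic function on $\RR^n$, hence constant, which is exactly your computation with $u=e^{(n-2)f/2}$. The Yamabe-type transformation law and the Liouville theorem for positive harmonic functions are the same two ingredients used in the paper.
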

\begin{proof}
We write $g = e^{2f} g_{\RR^n}$ for some smooth function $f : \RR^n \to \RR$ and denote by $\Delta$ the Laplacian of $g_{\RR^n}$. The conformal change formula for the scalar curvature \cite[\S 1.159]{besse} gives:
\begin{equation}
0 =(n-1) e^{-2f}(2\Delta f-(n-2)|df|^2)=e^{-2f} \frac{4(n-1)}{(n-2)} e^{-(n-2) f/2} \Delta \left( e^{(n-2) f/2} \right),
\end{equation}
implying
\begin{equation}
\Delta \left( e^{(n-2) f/2} \right) = 0.
\end{equation}
This means that $e^{(n-2) f/2}$ is a positive harmonic function on $\RR^n$, thus it is constant, showing that $g$ is homothetic to $g_{\RR^n}$.
\end{proof}

A consequence of Theorem~\ref{conformRn} is a classification of homogeneous spaces globally conformal to the Euclidean space $\RR^n$:

\begin{prop} \label{simplyconnectedHspace}
Let $(H, g_H)$ be a simply connected homogeneous space of dimension $n \ge 3$. Assume there is a metric $g \in [g_H]$ such that $(H, g)$ is isometric to $\RR^n$. Then, $g$ is homothetic to $g_H$. In particular, $(H, g_H)$ is isometric to $\RR^n$.
\end{prop}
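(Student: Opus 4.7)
The plan is to transfer the problem to $\RR^n$ via the given isometry $\phi:(H,g)\to(\RR^n, g_{\RR^n})$ and exploit the rigidity of conformal diffeomorphisms of $\RR^n$ provided by Liouville's theorem. Set $\tilde g_H := \phi_* g_H$, so that $\tilde g_H\in [g_{\RR^n}]$, and write $\tilde g_H = e^{2f} g_{\RR^n}$ for some $f\in C^\infty(\RR^n)$; the goal is to show $f$ is constant. Homogeneity of $(H,g_H)$ gives a transitive subgroup $G\subset \mathrm{Isom}(H,g_H)$ which, conjugated by $\phi$, yields a transitive subgroup $\tilde G\subset \mathrm{Isom}(\RR^n,\tilde g_H)$. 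Since elements of $\tilde G$ are in particular conformal diffeomorphisms of $(\RR^n, g_{\RR^n})$ and $n\ge 3$, Liouville's theorem forces them to be similitudes $x\mapsto \lambda R x + b$ with $\lambda>0$, $R\in \mathrm{O}(n)$, $b\in \RR^n$, and a short computation shows such a map preserves $\tilde g_H = e^{2f}g_{\RR^n}$ if and only if $f\circ\tilde\gamma + \log\lambda = f$.

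The key step, which I expect to be the main obstacle, is to show that the dilation factor of every element of $\tilde G$ equals $1$. Suppose some $\tilde\gamma\in\tilde G$ has $\lambda\ne 1$. Since $R\in\mathrm{O}(n)$ has all complex eigenvalues on the unit circle, $I-\lambda R$ is invertible, so $\tilde\gamma$ admits a unique fixed point $x_*\in\RR^n$; in coordinates centered at $x_*$ the map becomes $y\mapsto \lambda R y$, whose iterates satisfy $|\tilde\gamma^k(y)|=\lambda^k |y|$. Taking forward iterates if $\lambda<1$ (or backward iterates if $\lambda>1$) produces a sequence converging to $x_*$ in the Euclidean topology, which agrees with the topology induced by the smooth metric $\tilde g_H$, so the $\tilde g_H$-distance from the iterates to $x_*$ tends to $0$. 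But since $\tilde\gamma$ is an isometry fixing $x_*$, all these distances equal the positive constant $d_{\tilde g_H}(y, x_*)$, a contradiction. The subtle point is that one must carefully use the equivalence between the manifold topology and the Riemannian distance topology of $\tilde g_H$; no completeness is needed beyond smoothness and positive-definiteness of $\tilde g_H$.

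Hence $\lambda=1$ on $\tilde G$, so $\tilde G\subset \mathrm{O}(n)\ltimes \RR^n = \mathrm{Isom}(\RR^n, g_{\RR^n})$, meaning each element of $\tilde G$ simultaneously preserves $g_{\RR^n}$ and $\tilde g_H$, and hence preserves their conformal ratio $e^{2f}$; that is, $f\circ\tilde\gamma = f$. Transitivity of $\tilde G$ then forces $f$ to be constant, so $\tilde g_H = e^{2c} g_{\RR^n}$ and $g_H$ is homothetic to $g$. The ``in particular'' conclusion follows because $(\RR^n, e^{2c} g_{\RR^n})$ is isometric to $(\RR^n, g_{\RR^n})$ via the linear rescaling $x\mapsto e^c x$, so composition with $\phi$ exhibits an explicit isometry $(H, g_H)\cong \RR^n$.
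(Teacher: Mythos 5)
Your proof is correct, but it follows a genuinely different route from the paper's. The paper first invokes the Alekseevskii--Kimel'fel'd classification of conformally flat simply connected homogeneous spaces to reduce to the candidates $\RR^n$, $\SS^n$, $\HH^n$, $\RR\times\SS^{n-1}$, $\RR\times\HH^{n-1}$, discards all but $\RR^n$, $\HH^n$, $\RR\times\HH^{n-1}$ for topological reasons, and then handles the flat case via a separate rigidity statement (Theorem~\ref{conformRn}, proved by showing $e^{(n-2)f/2}$ is a positive harmonic, hence constant, function) and the hyperbolic cases via completeness. You instead push everything to $\RR^n$ through the given isometry, apply Liouville's theorem (valid since $n\ge 3$) to see that the transitive isometry group of $\tilde g_H=e^{2f}g_{\RR^n}$ acts by Euclidean similitudes, and rule out a nontrivial dilation factor by the fixed-point/iteration argument: an isometry of the smooth metric $\tilde g_H$ fixing $x_*$ cannot send a point $y\neq x_*$ to a sequence converging to $x_*$, since the topology induced by the Riemannian distance of $\tilde g_H$ is the manifold topology regardless of completeness. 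This forces $\lambda=1$, hence $f\circ\tilde\gamma=f$ for all $\tilde\gamma$, and transitivity makes $f$ constant. Each step checks out (in particular $I-\lambda R$ is invertible for $\lambda\neq 1$ since all eigenvalues of $R\in\mathrm{O}(n)$ lie on the unit circle, and the identity $f\circ\tilde\gamma+\log\lambda=f$ is the correct transformation law). What your approach buys is self-containedness: it avoids both the classification result \cite{AK78} and the harmonic-function lemma, replacing them by the classical Liouville theorem plus an elementary dynamical observation. What the paper's approach buys is that Theorem~\ref{conformRn} is isolated as a reusable statement and the argument makes visible exactly which conformally flat homogeneous geometries are being excluded; the only mild compression in your write-up is the (standard) step that a Möbius transformation of $\SS^n$ mapping $\RR^n$ onto itself must fix the point at infinity and is therefore a similitude.
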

\begin{proof}
The classification of (locally) conformally flat simply connected homogeneous spaces provided by \cite[Theorem 1]{AK78} implies that $(H, g_H)$ is homothetic to $\RR^n$, $\SS^n$, $\HH^n$, $\RR \times \SS^{n-1}$ or $\RR \times \HH^{n-1}$ since $(H, g_H)$ is globally conformally flat. Moreover, $H$ is diffeomorphic to $\RR^n$ by assumption, so the only possibilities in this list are $\RR^n$, $\HH^n$ and $\RR \times \HH^{n-1}$ for topological reasons.

{\bf Case 1: $(H, g_H)$ is homothetic to $\RR^n$.} Then, $g$ being a flat metric it is homothetic to the standard metric on $\RR^n$ by Theorem~\ref{conformRn} and thus homothetic to $g_H$.

{\bf Case 2: $(H, g_H)$ is homothetic to $\HH^n$.} We consider the model $\HH^n = \RR^{n-1} \times \RR^*_+$ with the metric being $\frac{1}{x_n^2} (dx_1^2 + \ldots + dx_n^2)$ in the standard coordinates. Then, $\HH^n$ is conformal to the flat manifold $\RR^{n-1} \times \RR^*_+$ with the standard metric. If $\HH^n$ were conformal to $\RR^n$, then $\RR^n$ would be conformal to $\RR^{n-1} \times \RR^*_+$, and by Theorem~\ref{conformRn} these two spaces would be homothetic. However, one is complete and not the other, which is a contradiction. Thus $(H, g_H)$ is not homothetic to $\HH^n$.

{\bf Case 3: $(H, g_H)$ is homothetic to $\RR \times \HH^{n-1}$.} Taking the same model as before for $\HH^{n-1}$, the metric on $\RR \times \HH^{n-1}$ is conformal to $x_n^2 dx_1^2 + dx_2^2 + \ldots + dx_n^2$. The Riemannian manifold $(\RR^2, dx_n^2 + x_n^2 dx_1^2)$ is the universal cover $\widetilde{\CC}^*$ of $\CC^*$ with the lift of the standard metric, thus it is a flat incomplete manifold. As in the previous case, $\RR^n$ would be homothetic to a flat incomplete manifold, which is a contradiction. Thus $(H, g_H)$ is not homothetic to $\RR \times \HH^{n-1}$.
\end{proof}

We are now able to prove the general result about conformal classes of homogeneous metrics mentioned above.

\begin{theorem} \label{conformLiegroup}
Let $(H,g)$ be a Riemannian homogeneous space. Assume there is a reducible and complete metric $g_0$ in the conformal class $[g]$. Then, $g_0$ is homothetic to $g$. Equivalently, $g$ is the unique possibly reducible complete metric in $[g]$, up to a multiplicative constant.
\end{theorem}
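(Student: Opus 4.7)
The plan is to combine Theorem~\ref{TashMiy} with the abundance of complete Killing vector fields on a homogeneous space, treating separately the case where the putative reducible metric $g_0$ is flat, since Tashiro--Miyashita excludes that situation.

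First I would lift everything to the universal cover $\pi:\tilde H\to H$. The metric $\tilde g:=\pi^*g$ makes $(\tilde H,\tilde g)$ a simply connected homogeneous (hence complete) Riemannian manifold, while $\tilde g_0:=\pi^*g_0$ remains complete and reducible inside $[\tilde g]$ (completeness lifts along a Riemannian covering, and the restricted holonomy groups of $g_0$ and $\tilde g_0$ agree). It suffices to show that $\tilde g_0$ is homothetic to $\tilde g$. By homogeneity, at every point $x\in\tilde H$ there exist Killing fields $X_1,\dots,X_n$ of $\tilde g$ whose values at $x$ form a basis of $T_x\tilde H$; being Killing on a complete Riemannian manifold they are complete, and since $\tilde g_0\in[\tilde g]$ they are a fortiori conformal vector fields for $\tilde g_0$.

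Next I would split into two cases. If $(\tilde H,\tilde g_0)$ is flat, then being complete and simply connected it is isometric to $\RR^n$, and Proposition~\ref{simplyconnectedHspace} applied to the homogeneous space $(\tilde H,\tilde g)$ directly yields that $\tilde g_0$ is homothetic to $\tilde g$. Otherwise $(\tilde H,\tilde g_0)$ is complete, non-flat, and reducible, so Theorem~\ref{TashMiy} forces each $X_i$ to be Killing also for $\tilde g_0$. Writing $\tilde g_0=e^{2f}\tilde g$ and expanding
\[\mathcal{L}_{X_i}\tilde g_0 \;=\; 2X_i(f)\,\tilde g_0 + e^{2f}\,\mathcal{L}_{X_i}\tilde g,\]
the vanishing of both Lie derivatives gives $X_i(f)=0$ for each $i$; applying the same argument at every point of $\tilde H$ shows that $df\equiv 0$, so $f$ is constant and $\tilde g_0$ is homothetic to $\tilde g$. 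The conclusion then descends along $\pi$ to $H$.

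The hard part, as foreshadowed by the preparation leading to Proposition~\ref{simplyconnectedHspace}, is precisely the flat branch: Theorem~\ref{TashMiy} is powerless when the ambient manifold is flat, and one cannot avoid invoking the classification of globally conformally flat simply connected homogeneous spaces through Proposition~\ref{simplyconnectedHspace}. The remaining subtleties---that reducibility and completeness truly pass to the universal cover and that one can genuinely span each tangent space by complete Killing fields---are routine consequences of homogeneity together with standard covering-space arguments.
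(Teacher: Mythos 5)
Your proposal is correct and follows essentially the same route as the paper: pass to the universal cover, use Proposition~\ref{simplyconnectedHspace} to dispose of the flat case, and in the non-flat case apply Theorem~\ref{TashMiy} to the complete Killing fields of $\tilde g$ (viewed as conformal fields of $\tilde g_0$) to force $X_i(f)=0$ and hence $df=0$ by homogeneity. The only cosmetic difference is that you make explicit the routine verifications (completeness and reducibility lifting to the cover, completeness of Killing fields) that the paper leaves implicit.
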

\begin{proof}
We consider the universal cover $\tilde H$ of $H$ and the two lifts $\tilde g$ and $\tilde g_0$ of the metrics $g$ and $g_0$ respectively.

Assume first that the metric $\tilde g_0$ is non-flat. Let $f \in C^\infty(H)$ such that $g_0 = e^{2f} g$, and denote by $\tilde f$ its lift to $\tilde H$. Then, $(\tilde H, \tilde g)$ is a complete Riemannian product. Every Killing vector field $\xi$ with respect to $\tilde g$ is complete and conformal with respect to $\tilde g_0$. Theorem~\ref{TashMiy} thus implies that $\xi$ is a Killing vector filed with respect to $\tilde g_0$. Therefore, one has
\[
0 = \mathcal L_\xi (e^{2 \tilde f} \tilde g) = 2\xi (\tilde f) \tilde g,
\]
and this yields $\xi (\tilde f) = 0$. This equality being true for any Killing vector field, and since each point of $\tilde H$ has a basis of (global) Killing vector fields, we conclude that $d f = 0$, i.e. $f$ is constant.

Assume now that $g_0$ is flat. Then, by Theorem~\ref{simplyconnectedHspace} $(\tilde H, \tilde g)$ is homothetic to $\RR^n$ and $\tilde g$ is homothetic to $\tilde g_0$, implying that $g$ is homothetic to $g_0$.
\end{proof}

\section{Decomposable LCP manifolds} \label{Sectiondecomp}

\subsection{Decomposable LCP structures} In \cite{FlaLCP} were introduced the so-called {\em reducible} LCP structures, which are LCP structures such that the conformal class $c$ contains a metric $g$ for which $(M,g)$ is a Riemannian product. Nevertheless, in view of \cite[Theorem 4.7]{BFM}, a slightly more general definition appears to be more relevant:

\begin{definition}
Let $(M,c)$ be a conformal manifold. We denote by $\mathcal D(M,c)$ the set of all metrics $g \in c$ with reducible holonomy, or equivalently, such that $TM$ carries a non-trivial $\nabla^g$-parallel distribution $0 \subsetneq D \subsetneq TM$.
\end{definition}

\begin{definition}\label{defdec}
An LCP manifold $(M,c,\nabla)$ is {\em decomposable} if $\mathcal D(M,c) \neq \emptyset$.
\end{definition}

The first examples of decomposable LCP structures are, as mentioned above, defined by reducible LCP manifolds. We recall here how to construct them.

\begin{example} \label{reducibleLCP}
Let $(M_1, c, \nabla_1)$ be an LCP manifold. Let $M_2$ be a compact manifold of positive dimension. Let $g_1 \in c$ be an {\em adapted} metric in the sense of \cite[Section 3.1]{FlaLCP}, i.e. such that the Lee form $\theta$ of $\nabla_1$ with respect to $g_1$ vanishes on the flat distribution of the LCP structure. Then, $\nabla_1 = \nabla^{g_1} + \bar \theta$, where $\bar \theta$ is the vector-valued bilinear form defined by
\begin{align} \label{defbar}
\bar \theta _X (Y) := \theta(X) Y + \theta (Y) X - g_1(X,Y) \theta^\sharp, && \forall X, Y \in TM.
\end{align}
For any Riemannian metric $g_2$ on $M_2$, the triple $(M := M_1 \times M_2, [g := g_1 + g_2], \nabla^g + \overline{\pi_1^* \theta})$ is an LCP structure, where $\pi_1:M\to M_1$ is the projection on the first factor $M_1$ and $\overline{\pi_1^* \theta}$ is the $(2,1)$-tensor determined by $\pi_1^* \theta$ as in \eqref{defbar}. This LCP structure is called {\em reducible}, and it is decomposable since $TM_1$ is a $\nabla^g$-parallel non-trivial distribution of $M$.
\end{example}

\begin{rem}
Not all decomposable LCP manifolds are reducible. An example of decomposable and irreducible LCP manifold is given in \cite[Example 4.12]{BFM}. The difference between the two notions is related to the action of the fundamental group on the universal cover. More precisely, by  \cite[Theorem 4.7]{BFM}, if $(M,g)$ has a decomposable LCP structure, then the universal cover $(\tilde M,\tilde g)$ is isometric to a Riemannian product $(\tilde M_1,\tilde g_1)\times (\tilde M_2,\tilde g_2)$, such that the flat factor is contained in $TM_1$ and the Lee form vanishes on $TM_2$. The action on $\tilde M$ of the fundamental group $\pi_1(M)$ preserves the product structure, that is, every $\gamma\in\pi_1(M)$ is of the form $(\gamma_1,\gamma_2)$ with $\gamma_i\in\mathrm{Iso}(\tilde M_i,\tilde g_i)$ for $i\in\{1,2\}$. If  $\pi_1(M)$ is a product $\Gamma_1\times\Gamma_2$ with $\Gamma_i\subset\mathrm{Iso}(\tilde M_i,\tilde g_i)$, then the LCP structure on $(M,g)$ is reducible. If not, it is only decomposable.
\end{rem}

In \cite{BFM}, the structure of decomposable LCP manifolds $(M,c,\nabla)$ was studied, showing that every metric $g\in\mathcal D(M,c)$ is adapted, and both the metric dual of the Lee form of $\nabla$ with respect to $g$, and the flat distribution, are tangent to one of the two $\nabla^g$-parallel distributions of $TM$. 

This result admits an easy but important corollary. In order to state it,  let $(M, c, \nabla)$ be a decomposable Lie LCP manifold and let $g \in \mathcal D(M,c)$. Since $M$ is compact, the pull-back metric $\tilde g$ to the universal cover $\tilde M$ is complete, so by the global de Rham decomposition theorem, 
$(\tilde M, \tilde g)$ is isometric to a product of complete simply connected Riemannian manifolds $(M_0, g_0) \times \ldots \times (M_k, g_k)$, $k \in \NN$, where $(M_0, g_0)$ is flat and the other factors are non-flat and irreducible.

\begin{theorem} \label{main1}
With the notations above, there exists $i \in \{ 1, \ldots, k \}$ such that, if $\RR^q$ is the flat part of the LCP manifold $(M, c, \nabla)$ and $\theta$ is the Lee form of $\nabla$ with respect to $g$, then $T \RR^q \oplus \RR \tilde \theta^\sharp \subset T M_i$ (where $\tilde \theta$ is the lift of $\theta$ to the universal cover of $M$).
\end{theorem}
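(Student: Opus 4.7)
The plan is to combine the version of \cite[Theorem 4.7]{BFM} recalled just above with the de Rham decomposition of $(\tilde M, \tilde g)$. By that result there is a $\nabla^g$-parallel orthogonal splitting $TM = \mathcal T_1 \oplus \mathcal T_2$ such that both $T\RR^q$ (descended to $M$) and $\theta^\sharp$ lie in $\mathcal T_1$. Lifting this splitting to $\tilde M$ and refining by de Rham, the uniqueness of the de Rham decomposition forces
\[
\tilde{\mathcal T}_1 = V_0 \oplus \bigoplus_{j \in J} TM_j,
\]
where $J \subset \{1, \ldots, k\}$ and $V_0 \subset TM_0$ is a parallel (constant) subspace of the Euclidean de Rham factor.

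To conclude, I would show that $T\RR^q \oplus \RR \tilde\theta^\sharp$ is concentrated in one of the non-flat summands $TM_{j_0}$ (with $j_0 \in J$, in particular $j_0 \ge 1$). For this I would iterate the \cite[Theorem 4.7]{BFM} dichotomy: every further grouping of the summands of $\tilde{\mathcal T}_1$ into two $\nabla^{\tilde g}$-parallel blocks yields a new binary parallel splitting of $T\tilde M$, and the [BFM] argument keeps $T\RR^q$ and $\tilde\theta^\sharp$ on the same side, forcing them into a single factor. To exclude $j_0 = 0$ (i.e., $V_0 \neq 0$), I would exploit the identity
\[
\nabla^{\tilde g}_X Y = \nabla^h_X Y + \tilde g(X,Y)\, \tilde\theta^\sharp, \qquad X, Y \in T\RR^q,
\]
which follows from $\tilde\nabla = \nabla^{\tilde g} + \bar{\tilde\theta}$ and the vanishing of $\tilde\theta$ on $T\RR^q$ (adaptedness of $g$). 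It says that $T\RR^q$ is a totally umbilical foliation of $\tilde M$ with non-vanishing mean curvature vector $\tilde\theta^\sharp$; if the pair $(T\RR^q, \tilde\theta^\sharp)$ lay inside the Euclidean factor $TM_0$, then for $q \ge 2$ each complete flat leaf of $T\RR^q$ would be a totally umbilical submanifold of Euclidean space with non-zero mean curvature, hence a round sphere, contradicting flatness.

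The main obstacle I foresee is the iteration step: \cite[Theorem 4.7]{BFM} is stated on the compact quotient $M$, and a generic $\nabla^{\tilde g}$-parallel binary splitting of $\tilde M$ need not descend to $M$, since $\pi_1(M)$ may permute de Rham factors. One therefore has to verify that the proof of \cite[Theorem 4.7]{BFM} applies to arbitrary $\nabla^{\tilde g}$-parallel binary splittings on the universal cover, which is plausible as that argument is essentially local and uses only the Lee form and the Weyl curvature; alternatively, one can project the displayed identity onto each de Rham factor and use that cross-terms vanish in a Riemannian product to rule out spreading across several factors. The degenerate case $q = 1$ in the flat-exclusion step also needs separate attention, as one-dimensional distributions are trivially totally umbilical; there one can invoke the closedness of $\theta$ together with the conformal relation $h = e^{2\tilde\phi}\tilde g$ with $\tilde\theta = d\tilde\phi$ to rule out the flat case.
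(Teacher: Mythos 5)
Your first step --- iterating the dichotomy of \cite[Theorem 4.7]{BFM} over the de Rham factors so that $T\RR^q$ and $\tilde\theta^\sharp$ end up in a single factor $TM_{i_0}$ --- is essentially the paper's own argument: the paper applies the dichotomy to each splitting $D_i\oplus D_i^\perp$ and concludes from $\bigcap_i D_i^\perp=\{0\}$ that some $D_{i_0}$ contains both $T\RR^q$ and $\tilde\theta^\sharp$. Your worry about whether such splittings of $T\tilde M$ descend to the compact quotient is legitimate, but it affects the paper's proof in exactly the same way and is not where the two arguments diverge.

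The divergence, and the genuine gap, is in excluding the flat factor $i_0=0$. Your umbilicity argument is sound in spirit for $q\ge 2$, with one imprecision: $\tilde\theta^\sharp$ need not be nowhere vanishing, so you must choose a leaf through a point where $\tilde\theta\neq 0$ and then use Codazzi (the mean curvature vector of a totally umbilical submanifold of $\RR^{n_0}$ of dimension $\ge 2$ is normal-parallel, hence nowhere zero on that leaf) before concluding the leaf is a round sphere and contradicting flatness and completeness. For $q=1$, however, the argument collapses entirely: a complete curve in Euclidean space with acceleration $\tilde\theta^\sharp$ is in no way forced to be spherical or incomplete, and your proposed repair (``closedness of $\theta$ together with $h=e^{2\tilde\phi}\tilde g$'') is not an argument --- $\tilde\theta=d\tilde\phi$ is exact, so closedness carries no extra information. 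Since $q=1$ is a perfectly admissible (indeed the most basic) value of the flat rank, this is a real hole. The paper closes the case $i_0=0$ uniformly in $q$ by reapplying the same dichotomy one level deeper: write the Euclidean factor as a product of lines $M_0=d_1\times\cdots\times d_p$; for each $j$ either $T\RR^q\oplus\RR\tilde\theta^\sharp\subset Td_j$, which is impossible since the left-hand side has dimension $q+1\ge 2$ (adaptedness makes $\tilde\theta^\sharp$ orthogonal to $T\RR^q$), or $T\RR^q\oplus\RR\tilde\theta^\sharp\subset Td_j^\perp$; the latter for all $j$ forces $T\RR^q\oplus\RR\tilde\theta^\sharp=\{0\}$, a contradiction. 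You should replace, or at least supplement, your umbilicity step by this combinatorial one.
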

\begin{proof}
 For every $i \in \{0, \ldots, k\}$, applying \cite[Theorem 4.7]{BFM} to the decomposition $T \tilde M = D_i \oplus D_i^\perp$ yields that $T \RR^q$ and $\tilde \theta^\sharp$ are either contained in $D_i$ or in $D_i^\perp$. Thus, there exists $i_0$ such that $T \RR^q$ and $\tilde \theta^\sharp$ are in $D_{i_0}$, because otherwise $\tilde \theta^\sharp$ would be in $D_i^\perp$ for all $i$, which is a contradiction with the fact that $\tilde\theta$ is non-zero and $\bigcap_{0 \le i \le n} D_i^\perp = \{ 0 \}$.

Assume that $i_0 = 0$. Since $(M_0, g_0)$ is a complete simply connected flat manifold, one can write $M_0 = d_1 \times \ldots \times d_p$ with $d_j$ being of dimension $1$. Applying \cite[Theorem 4.7]{BFM}  again, one has that for every $j\in\{1,\ldots,p\}$, $T \RR^q \oplus  \RR \tilde \theta^\sharp \subset T d_j \textrm{ or } T \RR^q \oplus  \RR \tilde \theta^\sharp \subset Td_j^\perp$. The first inclusion being impossible for dimensional reasons, the second inclusion has to hold for all $j$, leading to a contradiction since $\bigcap_{1 \le i \le p} T d_j^\perp = \{ 0 \}$. Thus, $i_0 \in \{1, \ldots, k\}$.\qedhere
\end{proof}

The de Rham factor defined in Theorem~\ref{main1},(2) contains almost all the information about the LCP structure, so we give it a name for later purposes:

\begin{definition}
The factor $(M_i, g_i)$ defined in Theorem~\ref{main1}, is called the {\em principal factor} of the decomposable LCP structure with respect to $g$.
\end{definition}

The above result says that the tangent bundle of the principal factor of a decomposable LCP manifold $(M,c,\nabla)$ with respect to a metric $g\in\mathcal D(M,c)$ contains the flat distribution $\mathbb{R}^q$, as well as the real line spanned by the metric dual of the Lee form of $\nabla$.
However, we will now show that it cannot be reduced to the integral manifold of the sum of these two distributions:

\begin{prop}\label{q+2}
Let $(M,c,\nabla)$ be a decomposable LCP manifold. Let $g \in \mathcal D(M,c)$ and let $(M_1,g_1)$ be the principal factor with respect to $g$. If $q$ is the dimension of the flat distribution of the LCP structure, then the dimension of $M_1$ is at least $q+2$.
\end{prop}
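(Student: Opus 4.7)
The plan is to argue by contradiction: suppose $\dim M_1 = q+1$, so that by Theorem~\ref{main1} we have $TM_1 = T\mathbb{R}^q \oplus \mathbb{R}\tilde\theta^\sharp$. The strategy is to show that under this assumption the conformally rescaled metric $h_1 := e^{2\tilde f_1}g_1$ on $M_1$ is flat, and then to exploit the resulting rigidity to derive a contradiction with the Kourganoff decomposition $(\tilde M, h) = \mathbb{R}^q \times N$, in which $N$ is non-flat.

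First, I would set up the warped-product structure of $h$ over $M_1$. Since $g$ is adapted \cite{BFM} (so $\tilde\theta|_{T\mathbb{R}^q} = 0$) and $\tilde\theta^\sharp \in TM_1$, the primitive $\tilde f \in C^\infty(\tilde M)$ of $\tilde\theta$ is constant along $T\mathbb{R}^q$ and along the $\tilde g$-orthogonal complement of $TM_1$; hence $\tilde f = \tilde f_1 \circ \pi_1$ for some $\tilde f_1 \in C^\infty(M_1)$. Writing $M' := \prod_{i \ne 1} M_i$ and $g' := \bigoplus_{i \ne 1} g_i$, the metric $h = e^{2\tilde f}\tilde g$ equals $h_1 + e^{2\tilde f_1}g'$, realizing $(\tilde M, h)$ as the warped product $(M_1, h_1) \times_{\exp(\tilde f_1)} (M', g')$.

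Next, I would show that $(M_1, h_1)$ is flat. The slices $M_1 \times \{m'\}$ are totally geodesic in the warped product, so $\nabla^{h_1}$ is the restriction of $\nabla^h$ to $TM_1$-valued tensors on $M_1$. The distribution $T\mathbb{R}^q$ is $\nabla^h$-parallel on $\tilde M$: the warped-product cross-term $\nabla^h_V X = X(\tilde f_1) V$ vanishes on $X \in T\mathbb{R}^q$ by the adapted condition. Hence $T\mathbb{R}^q$ is $\nabla^{h_1}$-parallel on $M_1$, and since $\dim M_1 = q+1$ its one-dimensional $h_1$-orthogonal complement is automatically $\nabla^{h_1}$-parallel too. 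The local de Rham theorem then splits $(M_1, h_1)$ as a Riemannian product of two flat factors, showing $(M_1, h_1)$ is flat.

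Finally, I would derive the contradiction. If $M' = \emptyset$, the argument is immediate: $\dim \tilde M = q+1$, while Kourganoff's theorem gives $\dim N \ge 2$ (as $N$ is non-flat) and therefore $\dim \tilde M \ge q+2$. In the remaining case $\dim M' \ge 1$, I would use the explicit warped-product form $g_1 = \phi(t)^2 g_{\mathbb{R}^q} + dt^2$ on $M_1 \cong \mathbb{R}^q \times \mathbb{R}$ (with $\phi$ non-constant, by irreducibility of $g_1$) together with the cocompact $\tilde g$-isometric action of $\pi_1(M)$: any $\gamma \in \pi_1(M)$ with $h$-similarity factor $\lambda(\gamma) \ne 1$ restricts to an isometry of $(M_1, g_1)$ satisfying $\tilde f_1 \circ \gamma_1 = \tilde f_1 + \tfrac12 \log \lambda(\gamma)$, inducing on the transversal the cocycle $\phi(t + a_\gamma) = \lambda(\gamma)^{-1/2}\phi(t)$ and forcing the $h$-similarity-compatible isometries of $(M_1, g_1)$ into a non-unimodular parabolic-type subgroup that is incompatible with the cocompactness of the $\pi_1(M)$-action on $\tilde M$. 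The hard part will be executing this last step cleanly, in particular the interplay between the $\tilde g$-isometric cocompact action, the $h$-similarity cocycle, and the warped-product constraints on $(M_1, g_1)$.
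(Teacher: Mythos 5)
Your setup reproduces the first half of the paper's argument: assume $\dim M_1=q+1$, observe that $\tilde f$ is a function of the transversal coordinate $t$ alone, and identify $(M_1,g_1)$ with $(\RR^q\times\RR,\,e^{-2f(t)}g_{\RR^q}+dt^2)$ together with the equivariance $f\circ\gamma_1=f+c_\gamma$. (Your intermediate observation that $(M_1,h_1)$ is flat is correct but, as you note, only settles the case $M'=\emptyset$, where $\dim N=1$ already contradicts Kourganoff.) The problem is the final step, which is where all the difficulty lies and where your proposed mechanism does not work. You want to conclude that the foliation- and cocycle-compatible isometries of $(M_1,g_1)$ form a ``non-unimodular parabolic-type subgroup'' incompatible with cocompactness. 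But when $f$ is affine, say $f(t)=\lambda t$ with $\lambda\neq 0$, the group $\RR^q\rtimes_\lambda\RR$ acts simply transitively by foliation-preserving isometries on $(M_1,g_1)$ (for $q=1$, $\lambda=1$ this is just $\HH^2$ with its horospherical foliation), so there is no obstruction to a subgroup of it acting cocompactly on $M_1$. Nor can you invoke Milnor's theorem that non-unimodular groups admit no lattices: the restricted group $\Gamma_1=\pi_1(M)|_{M_1}$ need not be discrete in $\mathrm{Iso}(M_1,g_1)$ (in the paper's ``strongly irreducible'' example it is dense in a one-parameter direction), and cocompactness of $\Gamma_1$ on $M_1$ only requires a dense, not a lattice, image. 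So no contradiction can be extracted from the geometry of $M_1$ and its restricted group action alone.

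The paper's actual contradiction is global and integral, using the compactness of $M$ itself rather than of any quotient of $M_1$. One first shows that $\xi^\flat$ is closed and that $H=\pi_1(M)|_{\RR}$ consists only of translations, then that there is $\lambda\neq 0$ such that $\varphi:=f(t)-\lambda t$ is $\pi_1(M)$-invariant and hence descends to $M$ (treating separately the cases where $H$ has rank one and where $H$ is dense, in which case $f$ is forced to be affine). Finally one computes $\delta^{\tilde g}\xi^\flat=q\,\tilde\theta(\xi)$ and hence
$\delta^g\bigl(e^{q\varphi}\bar\xi^\flat\bigr)=\lambda e^{q\varphi}$
on the compact manifold $M$; integrating and applying the divergence theorem forces $\lambda=0$, the desired contradiction. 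Some such global input (an exact form with a codifferential of constant sign on $M$, or an equivalent averaging argument over $M$) is indispensable, and it is exactly what is missing from your sketch.
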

\begin{proof}
Throughout the proof we will use the musical isomorphisms between $1$-forms and vector fields determined either by $\tilde g$ or by $g$, depending on whether we work on $\tilde M$ or $M$.

We prove the proposition by contradiction, assuming that the dimension of $M_1$ is $q+1$. Then, the orthogonal $(T \RR^q)^\perp$ in $T M_1$ of the flat distribution is a one-dimensional distribution, and thus defines a one-dimensional distribution $D$ on the universal cover $\tilde M$ of $M$. Since $\tilde M$ is simply connected, there is a $\tilde g$-unitary vector field $\xi \in T \tilde M$ generating $D$ at each point.

On $\tilde M$, the lift $\tilde \nabla$ of $\nabla$ is the Levi-Civita connection of a metric $h = e^{2f} \tilde g$, where $\tilde \theta := df$ is the Lee form of $\tilde\nabla$ with respect to $\tilde g$. Since $g$ is adapted, one has $\tilde \theta (X) = 0$ for any $X \in T \RR^q$, giving $\tilde \theta^\sharp \in (T \RR^q)^\perp \cap T M_1 = D$, and we deduce that $\tilde\theta = d f (\xi) \xi^\flat$.

Every vector $a\in\RR^q$ determines a $\tilde\nabla$-parallel vector field $X_a \in T \RR^q$. Using the formula for conformal change of Levi-Civita connections \cite[\S 1.159]{besse}, one has
\begin{equation} \label{4.7eq1}
\nabla^{\tilde g}_{X_a} \xi =\nabla^h_{X_a} \xi - \tilde\theta ({X_a}) \xi - \tilde\theta(\xi) {X_a} + \tilde g({X_a},\xi) \tilde\theta^\sharp = \tilde\nabla_{X_a} \xi - \tilde\theta (\xi) {X_a}.
\end{equation}
Moreover, one has $\tilde g (\nabla^{\tilde g}_{X_a} \xi, \xi) = 0$, so $\nabla^{\tilde g}_{X_a} \xi \in T M_1 \cap D^\perp = T \RR^q$. Equation~\eqref{4.7eq1} thus yields $\tilde \nabla_{X_a} \xi \in T \RR^q$. On the other hand, for any vector field $Y \in T \RR^q$ one has 
$$h(\tilde \nabla_{X_a} \xi,Y)=h(\nabla^h_{X_a} \xi,Y)=-h(\xi,\nabla^h_{X_a} Y) = 0,$$
thus showing that $\tilde \nabla_{X_a} \xi = 0$. In particular,
\begin{equation}\label{bracket}
    [{X_a},\xi] = \tilde \nabla_{X_a} \xi - \tilde \nabla_\xi {X_a} = 0,
\end{equation}
and by \eqref{4.7eq1} we also have the equality
\begin{equation} \label{4.7eq2}
\nabla^{\tilde g}_{X_a} \xi = - df(\xi) {X_a}.
\end{equation}

Moreover, $\tilde g(\nabla^{\tilde g}_{\xi} \xi,\xi)=0$, and for every $a\in\RR^q$ we have by \eqref{bracket}:
$$\tilde g(\nabla^{\tilde g}_{\xi} \xi,X_a)=-\tilde g(\xi,\nabla^{\tilde g}_{\xi}X_a)=-\tilde g( \xi,\nabla^{\tilde g}_{X_a}\xi)=0,$$
thus showing that 
\begin{equation} \label{4.7eq3}
\nabla^{\tilde g}_{\xi} \xi = 0.
\end{equation}

From \eqref{4.7eq2} and \eqref{4.7eq3} we obtain that $\xi^\flat$ is a closed $1$-form. In addition, $M_1$ is simply connected, hence there exists a function $\eta \in C^\infty (M_1)$ such that $d \eta = \xi^\flat$ and $\eta (x) = 0$.

Let now $x \in \tilde M$ and let $(M_1)_x$ be the integral manifold of the distribution $T M_1$ passing through $x$, which is diffeomorphic to $M_1$. We define the map $\Psi : \RR^q \times \RR \to (M_1)_x$, $(a,t) \mapsto \psi_a^1 \circ \psi_\xi^t (x)$, where $\psi_a^1$ is the flow of $X_a$ at time $1$ and $\psi_\xi^t$ is the flow of $\xi$ at time $t$.  We also introduce $p_{\RR^q}$, the projection onto the flat part in the decomposition $\tilde M = \RR^q \times N$. We can now define the map $\Xi : (M_1)_x \to \RR^q \times \RR$, $y \mapsto (p_{\RR^q} (y), \eta(y))$. Since the two flows in the definition of $\Psi$ commute, we easily get that $\Xi$ and $\Psi$ are inverse of each other because
\begin{align*}
d\Psi \circ d\Xi = \mathrm{id}, && d\Xi \circ d\Psi = \mathrm{id}, && \Psi \circ \Xi(x) = (0,0), && \Xi \circ \Psi(0,0) = x,
\end{align*}
so $\Psi$ is a diffeomorphism and $(M_1, g_1) \simeq (\RR^q \times \RR, e^{-2f} g_{\RR^q} + dt^2)$ where $g_{\RR^q}$ is the standard metric on $\RR^q$. Under this identification, $f$ is a function of $t$, $\xi=\frac{\partial}{\partial t}$ and $\tilde\theta = f'(t) dt$.

The action of $\pi_1(M)$ can be restricted to $M_1$, since this group acts by isometries of the metric $\tilde g$. In addition, $\pi_1(M)$ also preserves the decomposition $\tilde M \simeq \RR^q \times N$ introduced in Theorem~\ref{fundLCP}, thus it preserves the two factors of the decomposition $M_1 \simeq \RR^q \times \RR$ and we denote by $H$ the restriction of $\pi_1(M)$ to the last factor $\RR$. The group $H$ acts by isometries for the metric $dt^2$, so it contains only translations or translations composed with $-\mathrm{id}$. Up to a translation, we can assume that this last map, if it lies in $H$, is $-\mathrm{id}$, and $H$ is generated by translations and $\pm \mathrm{id}$.

We know that the function $f$ is equivariant, meaning that for any $\gamma \in \pi_1(M)$ there is $c_\gamma \in \RR$ such that
\begin{align}
\gamma^* f(t) = f(t) + c_\gamma, && \forall t \in \RR.
\end{align}
Note that the constant $c_\gamma$ associated to $\pm\mathrm{id}$ is necessarily $0$ and since there exists a non-isometric homothety of $h$ in $\pi_1(M)$, $H$ must contain at least one non-trivial translation $\tau \in \RR$. Without loss of generality, we can assume that $\tau > 0$ and $\tau^* f = f + c$ with $c > 0$. The function $f$ is then entirely determined by its values on $[0, \tau]$, and if $-\mathrm{id}$ were in $H$, then $f$ would be symmetric, which is impossible because $\lim_{t \to +\infty} f(t) = +\infty$ and $\lim_{t \to - \infty} f(t) = - \infty$. Thus, $H$ contains only translations.

If $H$ is an abelian group of rank $1$, then one can assume that $\tau$ generates $H$, and $t \mapsto f(t) - \frac{c}{\tau} t$ is $H$-invariant. If $H$ is of rank at least $2$, then $H$ is a dense subgroup of $(\RR,+)$ and $f - f(0)$ is a group homomorphism between $H$ and $(\RR,+)$. The set $f(H) - f(0)$ contains $c > 0$, thus $f - f(0)$ is a non-trivial group homomorphism from $(\RR,+)$ to itself by continuity. We deduce that $f$ is an affine map. In both cases, there is $\lambda \in \RR \setminus \{0\}$ such that $f(t) - \lambda t$ is $H$-invariant and thus $\pi_1(M)$-invariant when seen as a function on $\tilde M$. Therefore, it descends to a function $\varphi$ on $M$. Notice that $\lambda \neq 0$ because $f$ is unbounded.

Using Equation~\eqref{4.7eq2} and setting a basis $(e_1,\ldots,e_q)$ of $\RR^q$, we can compute the codifferential of $\xi^\flat = dt$:
\begin{equation}
\delta^{\tilde g} \xi^\flat = - \sum_{i=1}^q e_i \lrcorner \nabla^{\tilde g}_{e_i} \xi^\flat - \xi \lrcorner \nabla^{\tilde g}_\xi \xi^\flat = q \tilde\theta(\xi).
\end{equation}
The vector field $\xi$ is preserved by the fundamental group of $M$, as we already emphasized earlier, so it descends to a vector field $\bar \xi$ on $M$, and one has:
\begin{equation}
\delta^g (e^{q\varphi} \bar \xi^\flat) = - g(\nabla^g e^{q \varphi},\bar\xi) + e^{q\varphi} \delta^g \bar\xi = (\lambda - q) e^{q\varphi} \theta(\bar\xi) + q e^{q\varphi} \theta(\bar \xi) = \lambda e^{q\varphi}.
\end{equation}
Integrating this last equality on the compact manifold $M$ and using the divergence theorem yields $\lambda = 0$, which contradicts the fact noticed above that $\lambda \neq 0$.
\end{proof}

\subsection{Lie LCP structures} We will now turn our attention to the special case of LCP manifolds whose universal covers have a Lie group structure. This special situation was recently studied in \cite{ABM} and \cite{dBM}.

\begin{definition}\label{LieLCP}
A {\em Lie LCP structure} on a compact manifold $M$ is a pair $(g, \nabla)$ such that
\begin{itemize}
\item $(M, [g], \nabla)$ is an LCP manifold;
\item if $\tilde M$ is the universal cover of $M$ and $\tilde g$ is the lift of the metric $g$, then $(\tilde M, \tilde g)$ has the structure of a Riemannian Lie group, such that the lift $\tilde \theta_g$ to $\tilde M$ of the Lee form of $\nabla$ with respect to $g$ is left-invariant, and the action of $\pi_1(M)$ on $\tilde M$ is by left-translations.
\end{itemize}
\end{definition}

\begin{definition}
A Lie LCP manifold is a triple $(M, g, \nabla)$ such that $M$ is a compact manifold and $(g, \nabla)$ is a Lie LCP structure on $M$.
\end{definition}

Note that the Lie group structure on the universal cover of a Lie LCP manifold is not necessarily unique (see below). However, for every choice of Lie group structure $G=\tilde M$, the group $G$ carries lattices, so by \cite{Milnor} it is unimodular. Its Lie algebra $\mathfrak{g}$ carries an LCP structure $(g,\mathfrak{u},\theta)$  (cf. \cite[Definition 2.1]{dBM}), where $g$ is the metric induced by the metric $g$ on $M$, $\mathfrak{u}\simeq \mathbb{R}^q$ is the flat part and $\theta$ is the linear form on $\mathfrak{g}$ induced by $\tilde \theta_g$. This structure is proper (in the sense that $\mathfrak{u}\subsetneq \mathfrak{g}$), and adapted, in the sense that $\theta|_\mathfrak{u}=0$ by \cite[Theorem 4.2]{dBM}. Conversely, \cite[Proposition 2.4]{ABM} shows that every proper LCP Lie algebra $(\mathfrak{g}, g,\mathfrak{u},\theta)$ whose associated simply connected Lie group $G$ admits a lattice $\Gamma$, determines a Lie LCP structure on $M:=\Gamma\backslash G$.

In \cite[Corollary 4.10]{dBM} it is shown that every Lie algebra $\mathfrak{g}$ carrying an adapted LCP structure $(g,\mathfrak{u},\theta)$ is a semidirect product $\mathfrak{u}\rtimes_\alpha\mathfrak{h}$, where $\mathfrak{u}$ is a flat ideal and $\mathfrak{h}$ is a non-unimodular Lie algebra acting on $\mathfrak{u}$ by a conformal representation $\alpha$. Moreover, if $\mathfrak{g}$ is unimodular, then the Lee form $\theta$ (which by assumption vanishes on $\mathfrak{u}$) satisfies 
$$\theta(x)=-\frac1{\mathrm{dim}(\mathfrak{u})}\mathrm{tr}_\mathfrak{h}\mathrm{ad}_x,\qquad\forall x\in\mathfrak{h}.$$

\begin{example} \label{FundamentalEx} Let $(\mathfrak{h},g_h)$ be the 2-dimensional Lie algebra determined by an orthonormal basis $e_0,e_1$ satisfying $[e_0,e_1]=e_1$, and let $\alpha$ be the representation of $\mathfrak{h}$ on $\mathbb{R}$ given by $\alpha(e_0)=0$ and $\alpha(e_1)=-\mathrm{Id}_\RR$. Then $\mathfrak{g}:=\mathbb{R}\rtimes_\alpha\mathfrak{h}$ can also be written as $\RR^2 \rtimes \RR$, where the second factor acts on $\RR^2$ via the representation
\[
t \mapsto t \left( \begin{matrix} 1 & 0 \\ 0 & -1 \end{matrix} \right) =: t A_0.
\]

The simply connected Lie group $G$ with Lie algebra $\mathfrak{g}$
can be identified to the semi-direct product $\RR^2 \rtimes \RR$ endowed with the product
\begin{align}\label{gr}
(x,t) \cdot (x', t') = (x + e^{tA_0} x', t + t'), && \forall (x,t),(x',t') \in G.
\end{align}
Let $\lambda > 0$ be a root of the polynomial $P(X) := X^2 - 3 X + 1$. Setting $t_0 = \ln(\lambda)$, the matrix  $e^{t_0 A_0}$ has eigenvalues $\lambda$ and $\lambda^{-1}$ and its characteristic polynomial is $P$. Consequently, $e^{t_0 A_0}$ is conjugate to the matrix
\[
A = \left( \begin{matrix} 1 & 1 \\ 1 & 2 \end{matrix} \right),
\]
i.e. there exists $Q \in \mathrm{GL}_2 (\RR)$ such that $Q^{-1} A Q = e^{t_0 A_0}$. Let $(v_1, v_2) \in (\RR^2)^2$ be the column vectors of $Q^{-1}$ and let $\langle v_1, v_2 \rangle\simeq \mathbb{Z}^2$ and $\langle t_0 \rangle\simeq \mathbb{Z}$ denote the corresponding translation groups of $\mathbb{R}^2$ and $\mathbb{R}$ respectively. Then, the subgroup $\Gamma:= \langle v_1, v_2 \rangle \rtimes \langle t_0 \rangle$ of $G$ is a lattice in $G$, thus $M :=  \Gamma \backslash G$ is compact. We now denote by $\tilde g$ the left-invariant metric on $G$ determined by the standard metric on the Lie algebra $\mathfrak{g}\simeq\RR^3$. A straightforward computation shows that in the coordinates $((x,y),t) \in \RR^2 \times \RR$, this metric is given by
\begin{equation}
\tilde g = e^{-2 t} dx^2 + e^{2 t} dy^2 + dt^2,
\end{equation}
and it descends to a metric $g$ on $M$. The first two generators $v_1,v_2$ of the group $H$ act isometrically by left-translation on $(G, h := e^{2 t} \tilde g)$, whereas the third generator $t_0$ acts on $(G, h)$ as a strict homothety of ratio $\lambda^2$. The Levi-Civita connection of $h$ thus descends to a closed, non-exact Weyl connection $\nabla$ on $M$, which is reducible and non-flat, defining an LCP structure on $M$ (see \cite{FlaLCP} for similar but more general constructions of LCP manifolds). The LCP manifold $(M, [g], \nabla)$ is precisely the example introduced in \cite{MN15}. The lift of the Lee form of $\nabla$ with respect to $g$ is $dt$, which is left-invariant, and $\pi_1(M)=\Gamma$ is a subgroup of $G$, showing that $(M, g, \nabla)$ is a Lie LCP manifold.
\end{example}

\subsection{Decomposable Lie LCP manifolds}

In this section, we derive some direct corollaries about Lie LCP structures arising from the previous considerations. We then prove that the example of non-solvable Lie LCP manifold constructed in \cite[\S5.2]{dBM} is indecomposable. This later fact reinforces the interest of this example. 

Indeed, constructing decomposable Lie LCP manifolds which are not solvmanifolds is trivial, simply by taking an LCP solvmanifold (whose invariant metric is automatically adapted) and making the Riemannian product with the quotient of a semi-simple Riemannian Lie group by a co-compact lattice (which always exists). However, it is much more difficult to construct lattices in indecomposable LCP Lie algebras whose semi-simple part is of non-compact type, and this is the reason why it is important to check that the example in \cite[\S5.2]{dBM} is indecomposable as LCP manifold.

Our first remark is that, by the above results, in the Lie LCP setting we can remove all ambiguities in the definition of decomposable LCP manifold, since there is at most one reducible metric in the conformal class:

\begin{cor} \label{decompLieLCP}
Let $(M,g,\nabla)$ be a decomposable Lie LCP manifold. Then, $g$ is the only reducible metric in $[g]$ up to a multiplicative constant.
\end{cor}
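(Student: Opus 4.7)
The plan is to reduce the statement to Theorem~\ref{conformLiegroup} applied to the universal cover $\tilde M$ of $M$. By definition of a Lie LCP structure, $\tilde M$ carries the structure of a Lie group $G$, and the lift $\tilde g$ of $g$ is a left-invariant metric on $G$. In particular, $(G,\tilde g)$ is a Riemannian homogeneous space, so Theorem~\ref{conformLiegroup} is applicable: the only complete reducible metric in the conformal class $[\tilde g]$ is $\tilde g$ itself, up to a multiplicative constant.

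The first step is to pick an arbitrary $g_0\in\mathcal D(M,[g])$; such a metric exists because $(M,g,\nabla)$ is assumed to be decomposable. I then consider its lift $\tilde g_0$ to $\tilde M$. Two properties of $\tilde g_0$ need to be verified: first, that $\tilde g_0$ is complete, which follows from the completeness of $g_0$ (guaranteed by the compactness of $M$) and the fact that pullbacks of complete metrics along Riemannian coverings remain complete; second, that $\tilde g_0$ is reducible, which follows from the fact that any non-trivial $\nabla^{g_0}$-parallel distribution on $M$ lifts to a non-trivial $\nabla^{\tilde g_0}$-parallel distribution on $\tilde M$.

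Having placed $\tilde g_0$ in the hypotheses of Theorem~\ref{conformLiegroup}, I conclude that there exists $\lambda>0$ such that $\tilde g_0=\lambda\, \tilde g$. Since both metrics are pullbacks of metrics on $M$ along the same covering map, this equality descends to $g_0=\lambda g$, which is exactly the asserted uniqueness up to a multiplicative constant.

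There is no genuine obstacle to carrying out this plan: all the substantial work has already been done earlier in the paper, most notably in the proof of Theorem~\ref{conformLiegroup}, which relies on the Tashiro--Miyashita rigidity result for conformal vector fields on reducible manifolds and on the classification of simply connected homogeneous spaces globally conformal to $\RR^n$. The present corollary is essentially the observation that this rigidity persists after dividing by a lattice, provided the metric in question comes from a left-invariant one on the universal cover.
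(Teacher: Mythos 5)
Your proof is correct and follows essentially the same route as the paper: the paper's one-line proof also reduces the statement to the conformal rigidity of the homogeneous metric on the universal cover $G=\tilde M$ (the paper cites Proposition~\ref{simplyconnectedHspace}, but the result actually needed for a possibly non-flat $g_0$ is Theorem~\ref{conformLiegroup}, which is exactly what you invoke). You simply spell out the details the paper leaves implicit, namely that the lift $\tilde g_0$ of a reducible metric $g_0\in\mathcal D(M,[g])$ is complete and reducible, and that the resulting homothety $\tilde g_0=\lambda\tilde g$ descends to $M$.
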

\begin{proof}
This follows directly from Theorem~\ref{simplyconnectedHspace} applied to the universal cover $G$ of $M$ endowed with its Lie group structure and the lift $\tilde g$ of $g$.
\end{proof}

This corollary is remarkable, because it does not hold for general LCP manifolds. We give here a counter-example:

\begin{example}
Consider a reducible LCP manifold $(M_1 \times M_2, [g_1 + g_2], \nabla^g + \overline{\pi_1^* \theta})$ as introduced in Example~\ref{reducibleLCP}, with $M_2 := S^1 \times S^1$. Let $(X_1,X_2)$ be the canonical left-invariant basis of $T(S^1 \times S^1)$ and let $(\eta_1, \eta_2)$ be its dual frame. We take any non-constant function $\bar f : S^1 \to \RR$ and we define $f := p_2^*(\bar f)$ where $p_2$ is the projection on the second factor of $S^1 \times S^1$. With these notations, we take the metric $g_2$ to be
\begin{equation}
g_2 := e^{2f} \eta_1^2 + \eta_2^2.
\end{equation}
Then, the two metrics
\begin{align}
g_1 + g_2 = g_1 + e^{2f} \eta_1^2 + \eta_2^2 && \textrm{and} && e^{-2f} (g_1 + g_2) = e^{-2f} (g_1 + \eta_2^2) + \eta_1^2
\end{align}
belong to the conformal class $[g_1 + g_2]$, are non-homothetic and are both reducible.
\end{example}

As a corollary of the general result for Lie groups, we give a necessary and sufficient condition of decomposability for Lie LCP manifolds in terms of the metric on the Lie algebra.

\begin{cor}\label{declcp}
Let $(M, g, \nabla)$ be a Lie LCP manifold. Denote by $(\mathfrak g, \langle \cdot, \cdot\rangle,\theta,\mathfrak{u})$ the corresponding LCP Lie algebra. Then, $(M,g,\nabla)$ is decomposable if and only if there exists a non-trivial orthogonal decomposition $\mathfrak g = \mathfrak g_1 \overset{\perp}{\oplus} \mathfrak g_2$ such that $\mathfrak g_1$ and $\mathfrak g_2$ are Lie subalgebras of $\mathfrak g$ satisfying \eqref{cond} and $\mathfrak{u}+\theta^\sharp\subset\mathfrak{g}_1$.
\end{cor}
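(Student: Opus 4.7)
The plan is to use Corollary~\ref{decompLieLCP} to reduce decomposability of $(M,c,\nabla)$ to the reducibility of the specific left-invariant metric $g$ itself, and then translate this reducibility into an algebraic splitting via Proposition~\ref{redg}, employing Theorem~\ref{main1} to pin down on which side the subspace $\mathfrak{u}+\theta^\sharp$ must sit.

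For the ``only if'' direction, I would assume that $(M,g,\nabla)$ is decomposable. By Corollary~\ref{decompLieLCP}, $g$ itself lies in $\mathcal{D}(M,c)$, so the left-invariant lift $\tilde g$ on $G$ has reducible holonomy. Theorem~\ref{2.5} and Remark~\ref{rdr} realize the de Rham decomposition $(G,\tilde g)=(G_0,g_0)\times\cdots\times(G_k,g_k)$ by Lie subgroups, and Theorem~\ref{main1} produces an index $i_0\in\{1,\ldots,k\}$ such that $T\mathbb{R}^q\oplus\mathbb{R}\tilde\theta^\sharp\subset TG_{i_0}$. Setting $\mathfrak{g}_1:=\mathrm{Lie}(G_{i_0})$ and $\mathfrak{g}_2:=\bigoplus_{i\neq i_0}\mathrm{Lie}(G_i)$, both are non-zero Lie subalgebras (by Remark~\ref{rdr}, since reducibility of $\tilde g$ forces at least two de Rham factors). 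Orthogonality is immediate from the de Rham splitting, the relations~\eqref{cond} follow from the forward direction of Proposition~\ref{redg} applied to the $\nabla^{\tilde g}$-parallel decomposition $TG_{i_0}\oplus\bigoplus_{i\neq i_0}TG_i$, and evaluating at the identity---using that $\tilde\theta$ and the flat distribution are left-invariant in the Lie LCP setting---yields $\mathfrak{u}+\theta^\sharp\subset\mathfrak{g}_1$.

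For the ``if'' direction, Proposition~\ref{redg} applied to the given splitting produces left-invariant, $\nabla^{\tilde g}$-parallel distributions $T_1, T_2$ on $G$. Since $\pi_1(M)$ acts on $G$ by left-translations (Definition~\ref{LieLCP}), and left-translations preserve left-invariant distributions, $T_1$ and $T_2$ are equivariant and descend to $\nabla^g$-parallel distributions on $M$, placing $g$ in $\mathcal{D}(M,c)$. The containment $\mathfrak{u}+\theta^\sharp\subset\mathfrak{g}_1$ plays no role in this direction and is simply a free part of the existential claim.

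The main difficulty I anticipate is conceptual rather than computational: one must recognize that the ``at most one reducible metric in the conformal class'' phenomenon supplied by Corollary~\ref{decompLieLCP} is precisely what allows the algebraic criterion of Proposition~\ref{redg} to capture decomposability completely, and one must regroup the potentially long de Rham decomposition into a two-piece splitting isolating the principal factor, which Remark~\ref{rdr} makes harmless since arbitrary sums of de Rham factors remain Lie subalgebras.
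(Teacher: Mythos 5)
Your proposal is correct and follows essentially the same route as the paper, whose proof is the one-line citation of Proposition~\ref{redg} and \cite[Theorem 4.7]{BFM}; you simply spell out the details, including the reliance on Corollary~\ref{decompLieLCP} (to know that the reducible metric must be the left-invariant $g$ itself), which the paper leaves implicit. No gaps.
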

\begin{proof}
Direct consequence of Proposition \ref{redg} and \cite[Theorem 4.7]{BFM}.
\end{proof}

As an application, we will show the indecomposability of the Lie LCP structures determined by the non-solvable LCP Lie algebra constructed in \cite[\S5.2]{dBM}. We briefly recall this construction.

For $d\geq 2$ we set $n:=d^2$ and identify $\mathbb{R}^n$ with the set of $d\times d$ matrices. Right multiplication defines 
a Lie algebra representation $\rho:\mathfrak{sl}(d,\mathbb{R})\to  \mathfrak{gl}(n,\mathbb{R})$. 

Let $A$ be the $2\times 2$ diagonal matrix $A:=\mathrm{diag}(1,-1)$. We define two Lie algebra representation $\tau_1:\mathfrak{sl}(d,\mathbb{R})\times \mathbb{R}\to \mathfrak{gl}(n+1,\mathbb{R})$ and $\tau_2:\mathfrak{sl}(d,\mathbb{R})\times \mathbb{R} \to \mathfrak{gl}(2,\mathbb{R})$  by 
\[
 \tau_1(M,t)={\rm diag}(\rho(M),0),\qquad \tau_2(M,t)=tA, \qquad \forall M\in \mathfrak{sl}(d,\mathbb{R}),\ \forall t\in\mathbb{R},
\]
where the first map is block diagonal using the inclusion $\mathfrak{gl}(n,\mathbb{R})\subset \mathfrak{gl}(n+1,\mathbb{R})$. Their tensor product gives rise to the representation 
\[
\tau:=\tau_1\otimes \tau_2:\mathfrak{sl}(d,\mathbb{R})\times \mathbb{R} \to \mathfrak{gl}(\mathbb{R}^{n+1}\otimes\mathbb{R}^2)=\mathfrak{gl}(2n+2,\mathbb{R})
\]
which is explicitly defined by
\begin{equation}
\label{tau}
\tau(M,t)=\mathrm{diag}(\rho(M),0)\otimes \mathrm{Id}_2+t \,\mathrm{Id}_{n+1}\otimes A, \qquad \forall M\in\mathfrak{sl}(d,\mathbb{R}), \  \forall t\in\mathbb{R}.
\end{equation}

In \cite[Proposition 5.2]{dBM} it is shown that the Lie algebra obtained as a semidirect product 
\[\mathfrak{g}=(\mathbb{R}^{n+1}\otimes \mathbb{R}^2)\rtimes_{\tau} (\mathfrak{sl}(d,\mathbb{R})\oplus \mathbb{R} b)
\]
carries an LCP structure and its corresponding simply connected Lie group $G$ carries a lattice $\Gamma$, thus defining a Lie LCP structure on the manifold $M:=\Gamma\backslash G$. More precisely, one considers the inner product $g$ on $\mathfrak{g}$ making the factors $\mathfrak{sl}(d,\mathbb{R})$, $\mathbb{R} b$, $\mathbb{R}^{n+1}\otimes \mathbb{R}^2$ orthogonal, $b$ of norm 1 and such that, when restricted to $\mathbb{R}^{n+1}\otimes \mathbb{R}^2$, it is the tensor product of the canonical inner products on each factor. If $\{e_i\}_{i=1}^{n+1}$ and $\{v_1,v_2\}$ denote the canonical bases of $\mathbb{R}^{n+1}$ and $\mathbb{R}^2$, respectively, the flat space is $\mathfrak{u}:=\mathbb{R} e_{n+1}\otimes v_1$ and the Lee form is the metric dual of $b$. 
 
According to \cite{dBM}, the Lie algebra $\mathfrak{g}$ is indecomposable (i.e. it is not a direct sum of proper ideals). However, the corresponding Lie LCP structure could still be decomposable. The next result shows that this is not the case.

\begin{prop} \label{notdecomposable}
    The Lie LCP manifold $M$ constructed above is non-decomposable.
\end{prop}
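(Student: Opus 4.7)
The plan is to apply Corollary~\ref{declcp} and argue by contradiction. Suppose a non-trivial orthogonal Lie subalgebra decomposition $\mathfrak{g} = \mathfrak{g}_1 \overset{\perp}{\oplus} \mathfrak{g}_2$ satisfying \eqref{cond} exists with $\mathfrak{u} + \mathbb{R} b \subset \mathfrak{g}_1$ (recall $\mathfrak{u} = \mathbb{R}\, e_{n+1}\otimes v_1$ and $\theta^\sharp = b$); the goal is to force $\mathfrak{g}_2 = \{0\}$. The first step locates $\mathfrak{g}_2$ inside the simple factor $\mathfrak{s}=\mathfrak{sl}(d,\mathbb{R})$. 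Since $b \in \mathfrak{g}_1$, elements of $\mathfrak{g}_2$ are orthogonal to $b$; applying \eqref{cond2} with $x_1 = b$, and using the observation $[\mathfrak{g},\mathfrak{g}] \subset V \oplus \mathfrak{s}$ (which makes the second summand of \eqref{cond2} vanish), gives $[b, \mathfrak{g}_2] \subset \mathfrak{g}_1^\perp = \mathfrak{g}_2$. The operator $\mathrm{ad}_b$ acts as $\mathrm{Id}_{n+1}\otimes A$ on $V$ and as zero on $\mathfrak{s}\oplus \mathbb{R} b$, so its eigenspaces in $\mathfrak{g}$ are $V^+ = \mathbb{R}^{n+1}\otimes \mathbb{R} v_1$ and $V^- = \mathbb{R}^{n+1}\otimes \mathbb{R} v_2$ (eigenvalues $\pm 1$) together with $\mathfrak{s}\oplus \mathbb{R} b$ (eigenvalue $0$). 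Hence $\mathfrak{g}_2$ decomposes as the direct sum of its intersections with these three eigenspaces, and applying \eqref{cond} with $x_1 = b$ and $x_2 \in \mathfrak{g}_2\cap V^\pm$ yields $0 = \langle [b,x_2], x_2\rangle = \pm\|x_2\|^2$, forcing $\mathfrak{g}_2\cap V^\pm = 0$ and hence $\mathfrak{g}_2 \subset \mathfrak{s}$.

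The next step sharpens this to $\mathfrak{g}_2 \subset \mathfrak{so}(d)$. Since $V \perp \mathfrak{s}$, the inclusion $\mathfrak{g}_2 \subset \mathfrak{s}$ forces $V \subset \mathfrak{g}_1$. For any $v, w \in V$ and $M \in \mathfrak{g}_2$, applying \eqref{cond2} with $x_1 = v$, $y_1 = w$, $x_2 = M$ and using $[v,M] = -\tau(M,0)v$ together with the tensor-product structure of the inner product on $V$ yields the skew-symmetry of $\tau(M,0) = \mathrm{diag}(\rho(M),0)\otimes \mathrm{Id}_2$ on $V$. This forces $\rho(M)$ to be skew-symmetric on $\mathbb{R}^n = M_d(\mathbb{R})$ equipped with the Hilbert-Schmidt product; since $\rho$ is (up to sign) right multiplication one has $\rho(M)^\top = \rho(M^\top)$, and the faithfulness of $\rho$ then yields $M + M^\top = 0$, so $\mathfrak{g}_2 \subset \mathfrak{so}(d)$.

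The contradiction arises via the Cartan decomposition of $\mathfrak{s}$. The requirement that $\mathfrak{g}_1 = V \oplus \mathbb{R} b \oplus (\mathfrak{g}_1\cap \mathfrak{s})$ be a Lie subalgebra reduces, after inspection of the remaining brackets, to $\mathfrak{g}_1 \cap \mathfrak{s}$ being a subalgebra of $\mathfrak{s}$. With the Hilbert-Schmidt inner product on $\mathfrak{sl}(d,\mathbb{R})$ (the canonical positive-definite form derived from the Cartan involution $X\mapsto -X^\top$), the decomposition $\mathfrak{sl}(d,\mathbb{R}) = \mathfrak{so}(d) \overset{\perp}{\oplus} \mathfrak{p}$ (with $\mathfrak{p}$ the space of traceless symmetric matrices) is orthogonal, so $\mathfrak{g}_2 \subset \mathfrak{so}(d)$ gives $\mathfrak{p} \subset \mathfrak{g}_1\cap \mathfrak{s}$. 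The subalgebra property combined with the standard identity $[\mathfrak{p}, \mathfrak{p}] = \mathfrak{so}(d)$ (valid for $d \geq 2$, since $\mathfrak{sl}(d,\mathbb{R})$ is simple and $\mathfrak{p} + [\mathfrak{p},\mathfrak{p}]$ is a nonzero ideal) forces $\mathfrak{so}(d) \subset \mathfrak{g}_1\cap \mathfrak{s}$. Together with $\mathfrak{g}_2 \subset \mathfrak{so}(d)$ and $\mathfrak{g}_1 \perp \mathfrak{g}_2$, we conclude $\mathfrak{g}_2 = 0$, contradicting non-triviality. The main obstacle is the careful orthogonality bookkeeping (within $\mathfrak{g}$ versus restricted to $\mathfrak{s}$) and unpacking how the bilinear conditions \eqref{cond1}--\eqref{cond2} interact with the $\mathrm{ad}_b$-eigenspace decomposition; once $\mathfrak{g}_2 \subset \mathfrak{so}(d)$ is obtained, the final Cartan-decomposition step is routine.
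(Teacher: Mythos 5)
Your proposal is correct and follows essentially the same strategy as the paper's proof: reduce via Corollary~\ref{declcp}, use the $\mathrm{ad}_b$-eigenspace decomposition and condition \eqref{cond} to force $\mathfrak{g}_2\subset\mathfrak{sl}(d,\mathbb{R})$, then push $\mathfrak{g}_2$ into $\mathfrak{so}(d)$ via the action on $V$, and conclude with $[\mathfrak{p},\mathfrak{p}]=\mathfrak{so}(d)$ and orthogonality. The only (immaterial) difference is that you use the polarized condition \eqref{cond2} on two elements of $V$ to get skew-symmetry of $\rho(M)$, where the paper applies the quadratic condition \eqref{cond} to $N\otimes v_1$ and reads off orthogonality to $N^*N$.
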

\begin{proof}
    In view of Corollary \ref{declcp}, we need to show that if $\mathfrak g = \mathfrak g_1 {\oplus} \mathfrak g_2$ is an orthogonal decomposition such that $\mathfrak g_1$ and $\mathfrak g_2$ are Lie subalgebras of $\mathfrak g$ satisfying \eqref{cond} and such that $\mathfrak{u}+\theta^\sharp\subset\mathfrak{g}_1$, then $\mathfrak{g}_2=0$. 
    
    Let $\mathfrak g = \mathfrak g_1 {\oplus} \mathfrak g_2$ be such an orthogonal decomposition. Then the vectors  $b$ and $e_{n+1}\otimes v_1$ belong to $\mathfrak{g}_1$. Applying \eqref{cond2} to $y_1:=b$, and using the fact that $\theta$ is closed, so $b$ is orthogonal to $\mathfrak{g}'$, we obtain $\mathrm{ad}_b(\mathfrak{g}_2)\subset \mathfrak{g}_2$.  

    Using \eqref{tau} for $M=0$ and $t=1$ we see that $\mathrm{ad}_b$ has three eigenspaces: $\mathfrak{sl}(d,\mathbb{R})\oplus \mathbb{R} b$ for the eigenvalue 0, $\mathbb{R}^{n+1}\otimes v_1$ for the eigenvalue 1 and $\mathbb{R}^{n+1}\otimes v_2$ for the eigenvalue $-1$. Consequently, $\mathfrak{g}_2$ is a direct sum of three vector subspaces $\mathfrak{g}_2=E_0\oplus E_1\oplus E_2$, with $E_0\subset\mathfrak{sl}(d,\mathbb{R})\oplus \mathbb{R} b$,  $E_1\in \mathbb{R}^{n+1}\otimes v_1$, and $E_2\subset \mathbb{R}^{n+1}\otimes v_2$.

    Applying \eqref{cond} to $x_1=b$ and $x_2\in E_1$ we obtain 
    \[0=\langle [b,x_2], x_2 \rangle=\langle x_2, x_2 \rangle,\]
    thus showing that $E_1=\{0\}$. Similarly, taking $x_2\in E_2$ yields 
    \[0=\langle [b,x_2], x_2 \rangle=-\langle x_2, x_2 \rangle,\]
    so $E_2=\{0\}$ as well. Moreover $\mathfrak{g}_2$ is orthogonal to $b$ so it is contained in $\mathfrak{sl}(d,\mathbb{R})$. 

    We apply \eqref{cond} again, this time to an arbitrary element $x_2=M\in \mathfrak g_2 \subset \mathfrak{sl}(d,\mathbb{R})$ and $x_1=N\otimes v_1\in \mathbb{R}^n\otimes v_1\subset \mathfrak{g}_1$, where $N\in \mathbb{R}^n$ is identified to a $d\times d$ matrix. We get by \eqref{tau} for $t=0$:
    \[0=\langle [M,x_1], x_1 \rangle=\langle \rho(M)N\otimes v_1, N\otimes v_1 \rangle=\langle NM, N\rangle=\langle M, N^*N\rangle,
    \]
    showing that $M$ is orthogonal to $\mathrm{Sym}^2(\mathbb{R}^d)$ (because the set of matrices of the form $N^*N$ generates the vector space $\mathrm{Sym}^2(\mathbb{R}^d)$ of symmetric matrices). Since $M$ was an arbitrary element of $\mathfrak{g}_2$, this implies that $\mathfrak{g}_2$ is orthogonal to $\mathrm{Sym}^2(\mathbb{R}^d)$, thus
    $\mathfrak{g}_2\subset \mathfrak{so}(d,\mathbb{R})$. By orthogonality we then have $\mathrm{Sym}^2(\mathbb{R}^d)\subset \mathfrak{g}_1$, and since $[\mathrm{Sym}^2(\mathbb{R}^d),\mathrm{Sym}^2(\mathbb{R}^d)]=\mathfrak{so}(d,\mathbb{R})$, and $\mathfrak{g}_1$ is a subalgebra, we conclude that $\mathfrak{g}_1\supset\mathfrak{so}(d,\mathbb{R})$. On the other hand we have seen that $\mathfrak g_2$ is contained in $\mathfrak{so}(d,\mathbb{R})$ and is orthogonal to $\mathfrak{g}_1$. This shows that $\mathfrak g_2=0$, thus finishing the proof.
\end{proof}

\section{Weak reducibility and strong irreducibility}

As we emphasized before, the main issue with the notion of reducibility as introduced in Example~\ref{reducibleLCP} is that it does not behave well with respect to the action of the fundamental group on the universal cover. Nevertheless, in our seek for simplest LCP manifolds, we consider a weaker version of reducibility allowing one to partially overcome this obstacle.

\begin{definition} \label{weakreducible}
    Let $(M,c,\nabla)$ be a decomposable LCP manifold. Then, we say that $(M,c,\nabla)$ is {\em weakly reducible} if there is a metric $g \in \mathcal D(M,c)$ with principal factor $(M_1,g_1)$ such that the group $\Gamma_1:=\pi_1(M) \vert_{M_1}$ acts freely and properly discontinuously on $M_1$.
\end{definition}

The motivation of the previous definition is that, as we shall see in Proposition \ref{weakreducibleprop} below, $\Gamma_1\backslash M_1$ is a compact manifold which inherits an LCP structure. Thus, if $(M,c,\nabla)$ is weakly reducible, then even though the underlying Riemannian manifold $(M,g)$ is not necessarily a Riemannian product with a compact LCP factor, like in the reducible case, it is still obtained from a smaller compact LCP manifold by a mapping torus-like construction.

\begin{prop} \label{weakreducibleprop}
    Let $(M,c,\nabla)$ be a weakly reducible LCP manifold. Then, in the notation of Definition~\ref{weakreducible}, $M' := \Gamma_1\backslash M_1$ is a compact manifold, $g_1:=g|_{M_1}$ descends to a metric $g'$ on $\bar M$, and the triple $(M', [g'],
    \nabla \vert_{M_1})$ is an LCP manifold.
\end{prop}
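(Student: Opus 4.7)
The plan is to verify the LCP axioms for $(M',[g'],\nabla|_{M_1})$ in four steps.

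First, I would address the manifold and metric structure. Since $\Gamma_1$ acts freely and properly discontinuously on $M_1$ by assumption, $M'$ is a smooth manifold. Compactness follows by observing that every $\gamma=(\gamma_1,\gamma_2)\in\pi_1(M)$ acts on $\tilde M=M_1\times M_2$ componentwise, so the map $\tilde M\xrightarrow{p_1}M_1\to M'$ is $\pi_1(M)$-invariant and descends to a continuous surjection $M\to M'$; compactness of $M$ then forces compactness of $M'$. Since $\Gamma_1\subset\mathrm{Iso}(M_1,g_1)$, the metric $g_1$ descends to a Riemannian metric $g'$ on $M'$.

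Second, I would construct the Weyl connection. Writing $\tilde\nabla=\nabla^h$ with $h=e^{2f}\tilde g$, Theorem~\ref{main1} gives that $\tilde\theta=df$ is tangent to $M_1$, so $df$ vanishes on $TM_2$ and (using connectedness of $M_2$) $f=f_1\circ p_1$ for some $f_1\in C^\infty(M_1)$. The conformal change formula for Levi-Civita connections, together with $\nabla^{\tilde g}f_1\in TM_1$, shows that the slices $M_1\times\{x_2\}$ are totally geodesic in $(\tilde M,h)$ with induced metric $h_1:=e^{2f_1}g_1$ and induced connection $\nabla_1:=\nabla^{h_1}$. The equivariance $\gamma^* f=f+c_\gamma$ descends to $\gamma_1^* f_1=f_1+c_{\gamma_1}$, where $c_{\gamma_1}$ is well-defined because any element of $\pi_1(M)$ with trivial first projection acts as identity on $M_1$. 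Hence $\Gamma_1$ acts on $(M_1,h_1)$ by homotheties, preserving $\nabla_1$, so $\nabla_1$ descends to a closed Weyl connection $\nabla'$ on $(M',[g'])$ whose Lee form lifts to $df_1$.

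Third, non-exactness and reducibility follow quickly. If $\nabla'$ were exact, then by the classical fact that two conformally related Riemannian metrics share their Levi-Civita connection if and only if they differ by a multiplicative constant, $f_1$ would be $\Gamma_1$-invariant up to an additive constant, forcing $c_\gamma=0$ for all $\gamma\in\pi_1(M)$ and contradicting the non-exactness of $\nabla$. For reducibility, $T\RR^q\subset TM_1$ (by Theorem~\ref{main1}) is $\nabla^h$-parallel, and hence $\nabla^{h_1}$-parallel on $M_1$ since $M_1$-slices are totally geodesic in $(\tilde M,h)$; being $\Gamma_1$-invariant as a restriction of the $\pi_1(M)$-invariant flat distribution, it descends to a parallel distribution of rank $q\geq 1$ on $M'$.

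The main obstacle is proving non-flatness of $\nabla'$, equivalently that $h_1$ is not flat on $M_1$. I would argue by contradiction: if $h_1$ were flat, any $X_0\in T_{x_1}M_1$ would extend uniquely to a $\nabla^{h_1}$-parallel field $\tilde X$ on the simply connected $M_1$, and then to a field $\hat X$ on $\tilde M=M_1\times M_2$ constant in the $M_2$-direction. Using $\nabla^h=\nabla^{\tilde g}+\overline{df_1}$ together with $df_1|_{TM_2}=0$ and $TM_1\perp_{\tilde g}TM_2$, a short computation gives $\nabla^h_Z\hat X=df_1(\hat X)\,Z$ for every $Z\in TM_2$, so $\hat X$ is globally $\nabla^h$-parallel if and only if $df_1(X_0)=0$. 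On the other hand, since $N$ carries no flat de Rham factor in the Kourganoff splitting $(\tilde M,h)=\RR^q\times N$, every $\nabla^h$-parallel field on $\tilde M$ takes values in $T\RR^q$, so the space of such parallel extensions has dimension at most $q$. Since $df_1=\tilde\theta|_{TM_1}\neq 0$ by Theorem~\ref{main1}, its kernel inside $T_{x_1}M_1$ has dimension $\dim M_1-1$, forcing $\dim M_1\leq q+1$ and contradicting Proposition~\ref{q+2}. Ruling out the flat case, Kourganoff's trichotomy leaves only the reducible non-flat case, confirming that $(M',[g'],\nabla')$ is an LCP manifold.
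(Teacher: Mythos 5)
Most of your argument is sound and tracks the paper's proof: the descent of the metric, the identification of the $M_1$-slices as totally geodesic submanifolds of the warped product $(\tilde M, h)$ with induced connection $\nabla^{h_1}$, the descent of $\nabla^{h_1}$ via the homothetic action of $\Gamma_1$, the non-exactness argument, and the reducibility coming from $T\RR^q\subset TM_1$. Your compactness argument (the $\pi_1(M)$-invariance of $\tilde M\to M_1\to M'$ inducing a continuous surjection $M\to M'$) is a clean alternative to the paper's fundamental-domain argument, and your explicit treatment of non-exactness is more detailed than the paper's.

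The gap is in the non-flatness step. From $\nabla^h_Z\hat X=df_1(\hat X)\,Z$ for $Z\in TM_2$ one gets that $\hat X$ is $\nabla^h$-parallel if and only if the \emph{function} $df_1(\tilde X)$ vanishes identically on $M_1$, not merely at the base point $x_1$. Your ``if and only if $df_1(X_0)=0$'' silently assumes that $df_1(\tilde X)$ is constant along $M_1$ for a $\nabla^{h_1}$-parallel field $\tilde X$; since $d\bigl(df_1(\tilde X)\bigr)=\Hess^{h_1}f_1(\tilde X,\cdot)$, this would require the Hessian of $f_1$ with respect to $h_1$ to vanish, which is not available (and fails in the standard examples). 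The only directions for which $df_1(\tilde X)\equiv 0$ is actually known are those of $T\RR^q$ itself (because $g$ is adapted, so $\tilde\theta$ vanishes on the flat distribution); this produces exactly $q$ parallel fields, consistent with the upper bound $q$ coming from the maximality of the flat factor, so no contradiction with Proposition~\ref{q+2} is obtained. (A secondary point: the bound ``at most $q$ parallel fields'' uses that $N$ has no flat local de Rham factor, which is true in Kourganoff's theorem but not literally contained in the statement of Theorem~\ref{fundLCP} as quoted.) Non-flatness of $(M_1,h_1)$ is a genuine issue --- the paper itself does not reprove it but delegates it to \cite[Remark 2.6]{FlaLCP}, using that $(M_1,e^{2f}g_1)$ carries a non-trivial parallel distribution with complete flat leaves --- so this step needs either that citation or a correct replacement argument.
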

\begin{proof}
We start by proving that $M'$ is a compact manifold. The group $\Gamma_1$ acts freely and properly discontinuously on $M_1$. Thus, $\Gamma_1 \backslash M_1$ is a manifold. Since $M$ is compact, there exists a compact set $K \subset \tilde M$ such that $\pi_1(M) \cdot K = \tilde M$. The projections $K_1$ and $K_2$ of $K$ on $M_1$ and $M_2$ respectively are compact. One has $K \subset K_1 \times K_2$ and $\pi_1(M) \cdot (K_1 \times K_2) = \tilde M$. This implies that $\pi_1 (M) \vert_{M_1} \cdot K_1 = M_1$, yielding $\Gamma_1 \cdot K_1 = M_1$, so the action of $\Gamma_1$ on $M_1$ is co-compact.

The pull-back $\tilde \theta$ of $\theta$ to $\tilde M$ and the symmetric tensor $g_1$ are both invariant under the action of $\pi_1(M)$ and thus invariant under the action of $\Gamma_1$ when seen as objects on $M_1$. Moreover, $g_1:=g|_{M_1}$ is clearly the pull-back of a metric on $M_1$ by the projection $p_1:\tilde M\to M_1$, and by \cite[Theorem 4.7]{BFM}, $\tilde \theta$ is also the pull-back of a $1$-form on $M_1$ by $p_1$.
Consequently, they descend respectively to a metric $g'$ and a $1$-form $\theta'$ on $\Gamma_1\backslash M_1$.

Since  $\tilde \theta$ is exact on $\tilde M$, it has to be the pull-back of an exact $1$-form on $M_1$, thus there exists $f \in C^\infty(M_1)$ such that $\tilde \theta = d f$. Moreover, the lift $\tilde \nabla$ of $\nabla$ to $\tilde M$ is the Levi-Civita connection of $e^{2f} \tilde g$. Consequently, $(M_1, e^{2f}g_1)$ is a totally geodesic submanifold of the warped product $(M_1 \times M_2, e^{2f} g_1 + e^{2f} g_2)$, thus $\nabla^{e^{2f} g_1} = \nabla\vert_{M_1}$. In particular, $(M_1, e^{2f} g_1)$ has reducible holonomy, because the flat part of the LCP manifold $(M,c,\nabla)$ is still parallel with respect to the connection $\nabla \vert_{M_1}$. The fundamental group $\Gamma_1$ of $M'$ acts by homotheties on $(M_1, e^{2f} g_1)$, thus the connection $\nabla \vert_{M_1}$ descends to a connection on $M'$, which coincides with $\nabla^{g'} + \bar{\theta'}$. We conclude that $(M', [g'], \nabla^{g'} + \bar{\theta'} = \nabla \vert_{M_1})$ is a compact conformal manifold endowed with a connection with reducible holonomy. Moreover, the universal cover $(M_1, e^{2f} g)$ has a flat complete Riemannian factor, thus it is an LCP manifold (see \cite[Remark 2.6]{FlaLCP}).
\end{proof}

An example of a weakly irreducible LCP manifold is given in \cite[Example 4.11]{BFM}. It can be written under the form of a Lie LCP manifold.

\begin{example}
We consider the Lie algebra $\mathfrak g' := (\RR^2 \rtimes_{A_0} \RR) \times \RR$, where $A_0 = \mathrm{Diag}(1,-1)$. The simply connected group with Lie algebra $\mathfrak g'$ is $G' = G \times \RR$, where $G = \RR^{2} \rtimes \RR$ is the group constructed in Example \ref{FundamentalEx} with group law given by \eqref{gr}. We will use the coordinates $(x,y,t,s)$ on $G'$.

We consider the standard scalar product on $R^4 \simeq \mathfrak g'$, which induces a left-invariant metric $\tilde{g'}$ on $G'$. Using the notations of Example \ref{FundamentalEx}, we consider the subgroup
\[
\Gamma' := \langle v_1, v_2, (0,0,t_0,1), (0,0,0,\sqrt{2})\rangle,
\]
which is a lattice in $G'$. The metric
\begin{equation}
\tilde g' = e^{-2t} dx^2 + e^{2t} dy^2 + dt^2 + ds^2
\end{equation}
descends to a metric $g'$ on $\Gamma'\backslash G'$ and the Levi-Civita connection of $h' := e^{2 t} \tilde g$ descends to a connection $\nabla'$. It is straightforward to check that $(\Gamma' \backslash G', g', \nabla)$ is a Lie LCP manifold, and it is decomposable because $(G', \tilde{g'}) = (G,\tilde g) \times (\RR, ds^2)$. 

The lattice $\Gamma'$ is not a product of two subgroups acting separately on each factor of this product. However, the restriction of $\pi_1(M)$ to $G$ is exactly the group $\Gamma$ defined in Example~\ref{FundamentalEx}, so it acts freely and properly discontinuously, thus the LCP manifold is weakly irreducible.
\end{example}

The natural question coming with this new notion is: can we construct decomposable LCP manifolds which are not weakly reducible? We first give a name to such a manifold.

\begin{definition}\label{strongirr}
A decomposable LCP manifold which is not weakly reducible is called {\em strongly irreducible}. 
\end{definition}

The existence of strongly irreducible LCP manifolds is related to the existence of algebraic units of modulus $1$ which are not roots of unity. We give here an example:

\begin{example}
We consider the polynomial
\begin{equation}
P(X) = (X^2 - \frac{3 + \sqrt{5}}{2} X + 1) (X^2 - \frac{3 - \sqrt{5}}{2} X + 1) = X^4 - 3 X^3 + X^2 - 3 X + 1 \in \ZZ[X].
\end{equation}
The polynomial $(X^2 - \frac{3 - \sqrt{5}}{2} X + 1)$ has negative discriminant, thus its roots are two conjugated complex numbers of modulus $1$, that we denote by $e^{i \mu}$ and $e^{-i \mu}$, with $\mu \in \RR$. The polynomial $(X^2 - \frac{3 + \sqrt{5}}{2} X + 1)$ has a positive discriminant, thus it has two positive real roots that we denote by $\lambda$ and $\lambda^{-1}$, different from $\pm 1$. The matrix
\begin{equation}
    A = \left( \begin{matrix}
        \lambda & & & \\ & \lambda^{-1} & & \\ & & \cos(\mu) & - \sin(\mu) \\ & & \sin(\mu) & \phantom{-}\cos(\mu)
    \end{matrix} \right)
\end{equation}
is therefore similar to the companion matrix of $P$, so there exists $C \in \mathrm{GL}(4,\RR)$ such that $C A C^{-1} \in \mathrm{GL}(4,\ZZ)$. A logarithm of $A$ (i.e. a matrix $A_0$ such that $\exp(A_0)=A$) is given by
\begin{equation}
    A_0 = \left( \begin{matrix}
        \ln (\lambda) & & & \\ & - \ln(\lambda) & & \\ & & 0 & - \mu \\ & & \mu & \phantom{-}0
    \end{matrix} \right).
\end{equation}

We define the almost Abelian Lie algebra $\mathfrak g := \RR^4 \rtimes_{A_0} \RR$ and we endow it with the canonical scalar product of $\RR^5$. The simply connected Lie group with Lie algebra $\mathfrak g$ is $G := \RR^4 \rtimes \RR$ with the product
\begin{align}
    (x,t) \cdot (x',t') = (x + e^{t A_0} x', t + t'), && \forall (x,t),(x',t') \in G \times G.
\end{align}
It is easy to show that the left-invariant metric induced on $G$ is
\begin{equation}
    \tilde g = e^{-2 \ln(\lambda)t} dx_1^2 + e^{2 \ln(\lambda)t} dx_2^2 + dx_3^2 + dx_4^2 + dt^2.
\end{equation}
Let $v_1, v_2, v_3, v_4$ be the column vectors of $C^{-1}$. The subgroup $\Gamma := \langle v_1, v_2, v_3, v_4 \rangle \rtimes \langle 1 \rangle$ is a lattice of $G$ and the metric $\tilde g$ descends to $M := \Gamma \backslash G$. The metric $(G, h := e^{2 \ln(\lambda) t} \tilde g)$ has a complete flat Riemannian factor $\RR$ and its Levi-Civita connection descends to a closed non-exact Weyl connection on $(M, [g])$, thus $(M, g, \nabla)$ is a Lie LCP manifold.

The LCP manifold defined this way is decomposable because its universal cover admits the decomposition as Riemannian product
\[
(G,\tilde g) = (\RR^3, e^{-2 \ln(\lambda)t} dx_1^2 + e^{2 \ln(\lambda)t} dx_2^2 + dt^2) \times (\RR^2, dx_3^2 + dx_4^2).
\]
If this manifold were weakly reducible according to this decomposition, then $(\RR^3, dx_1^2 + e^{4 \ln(\lambda)t} dx_2^2 + e^{2 \ln(\lambda)t} dt^2)$ would be the universal cover of an LCP manifold endowed with the unique metric (up to a multiplicative constant) whose Levi-Civita connection is the lift of the Weyl connection by Proposition~\ref{weakreducibleprop}. The fundamental group of this LCP manifold would then act by homotheties, and the group of the similarity ratios would be generated by $\lambda$, which is an algebraic unit of degree $4$. However, this LCP manifold would be of dimension $3$, but all such LCP manifolds are classified (see \cite[Theorem 1.8]{Kou}) and have a group of homothety ratios containing only algebraic units of degree at most $2$, which is a contradiction. This example is thus strongly irreducible.
\end{example}

We can summarize the above considerations as follows. We have defined four classes of LCP structures:
\begin{enumerate}
    \item[$\mathcal{R}$]: Reducible LCP manifolds (Example \ref{reducibleLCP});
  \item[$\mathcal{WR}$]: weakly reducible LCP manifolds (Definition \ref{weakreducible});
   \item[$\mathcal{SI}$]: strongly irreducible LCP manifolds (Definition \ref{strongirr});
 \item[$\mathcal{D}$]: decomposable LCP manifolds (Definition \ref{defdec}).
\end{enumerate}

The examples above show that all these classes are non-empty and the following inclusion relations hold:
\[\mathcal{R}\subsetneq\mathcal{WR}\subsetneq\mathcal{D}=\mathcal{WR}\sqcup\mathcal{SI}.\]
	
\renewcommand{\refname}{\bf References}

\end{document}